\newcommand{\npmatrix}[1]{\left( \begin{matrix} #1 \end{matrix} \right)}
\DeclareMathOperator{\trace}{trace}
\newcommand{\R}{\mathbb{R}}
\newcommand{\F}{\mathbb{F}}
\newtheorem{proposition}{Proposition}[section]
\newtheorem{theorem}{Theorem}[section]
\newtheorem{example}{Example}[section]
\newtheorem{remark}{Remark}[section]
\newtheorem{definition}{Definition}[section]
\newtheorem{corollary}{Corollary}[section]
\newtheorem{lemma}{Lemma}[section]
\newenvironment{proof}{\textbf{Proof.}}{\qquad $\Box$ \bigskip }
\newenvironment{mainproof}{\textbf{Proof of Theorem \ref{main}.}}{\qquad $\Box$ \bigskip }
\begin{document}

\title{Power series with positive coefficients arising from the characteristic polynomials of positive matrices  \thanks{This work was supported by Science Foundation Ireland under Grant 11/RFP.1/MTH/3157}}

\author{Thomas J. Laffey,\ \ University College Dublin \and Raphael Loewy,\ \ Technion  \and Helena \v Smigoc,\ \ University College Dublin}

\date{}

\maketitle

\begin{abstract}
Let $A$ be an $n \times n$ (entrywise) positive matrix and let $f(t)=\det(I-t A)$. We prove the surprising result that there always exists a positive integer $N$ such that the formal power series expansion of $1-f(t)^{1/N}$ around $t=0$ has positive coefficients. 

\parindent=14 pt

{\bf Keywords:} Nonnegative matrices, Power series, Positive coefficients,  Nonnegative Inverse Eigenvalue Problem

{\bf Mathematics Subject Classication:} 15B48, 15A18,  37B10, 30B10
\end{abstract}

\section{Introduction}

Questions about power series are classical \cite{MR0034854,MR0027306,MR1544949,MR1544805,MR0030620,MR1575394}, and properties of coefficients of power series is one of the central topics in this context. 
For example, questions about the signs of the coefficients of reciprocal power series \cite{MR1544949,MR0064890,MR2869264} have found application in the study of renewal sequences, which are frequently applied in probability theory \cite{MR972601,MR0271658,MR0224175}. 

The problem of deciding whether a given function has a power series expansion with all its coefficients positive is of seemingly elementary nature, but can be surprisingly difficult. For example, a conjecture of H.~Lewy and K.~Friedrichs, which arose form their work on difference approximations to the wave equation, is that the rational function
 $$\frac{1}{(1-x)(1-y)+(1-y)(1-z)+(1-z)(1-x)}=
\sum_{k,m,n \geq 0}a(k,m,n)x^ky^mz^n$$  
has $a(k,m,n)>0$.  This was first proved by Szeg\"o \cite{MR1545428} using involved arguments on Bessel functions. This motivated a series of papers using a range of different methods \cite{MR1545429,MR0301441,MR2424632,Kauers07computeralgebra,MR2455705}. Despite a considerable body of work, a seemingly simple question about the positivity of the coefficients of the multivariate series expansion about the origin of 
 $$\frac{1}{1-x-y-z-w+\frac{2}{3}(xy+xz+xw+yz+yw+zw)}$$
 remains open since 1972 \cite{MR0301441}.

Apart from the variations of Szeg\"o's functions mentioned above, we know of very little literature on general families of functions with positive coefficients.   
In this paper we present a new family of multivariate functions with positive coefficients. This family arises from the determinants of structured matrices of the form
\begin{equation}\label{Sn}
X_n=\npmatrix{x_1 & 1 & 0 &  0 & \ldots & 0 \\
                            x_2 & x_1 & 2 & 0 & \ddots & 0 \\
                            x_3 & x_2 & x_1 & 3 & \ddots & \vdots \\ 
                            \vdots & \ddots & \ddots & \ddots & \ddots &0 \\
                            x_{n-1} &  & x_3  & x_2 & x_1 & (n-1) \\
                            x_n  & x_{n-1} & \ldots & x_3 & x_2 & x_1}.
\end{equation}
We show that multivariate expansion of $1-\det(I_n-tX_n)^{1/n}$ in $t, x_1,\ldots,x_n$ has positive coefficients. Our method depends on the matrix representation of the problem and uses tools from matrix theory, such as the trace vector, in an essential way. Despite looking at different methods, from outside matrix analysis, we were not able to find an alternative proof of this result. 

Matrices of the type (\ref{Sn}) were used in a constructive approach to the Boyle-Handelman result \cite{MR1097240} characterizing the nonzero spectra of entry-wise nonnegative matrices \cite{Laffey20121701}. This gave rise to the following result. Let $A$ be an $n \times n$ (entrywise) positive matrix and let $f(t)=\det(I_n-t A)$. We prove the surprising result that there always exists a positive integer $N$ such that the formal power series expansion of $1-f(t)^{1/N}$ around $t=0$ has positive coefficients.

\section{Short overview of the NIEP}\label{NIEP}

Spectral properties of (entrywise) nonnegative matrices have been studied widely in recent years. 
One of the central problems in this area is the \emph{nonnegative inverse eigenvalue problem (NIEP)}; the problem  of finding
necessary and sufficient conditions in order that a list of complex
numbers $\sigma=(\lambda_1,\lambda_2,\ldots,\lambda_n)$ be the spectrum of
an entrywise nonnegative matrix. In this section we recall some results on the NIEP that will provide a setting for the rest of the paper.  

If a list of complex numbers $\sigma$ is the spectrum of some nonnegative matrix $A$, we say that $\sigma$ is \emph{realizable} and that $A$ is \emph{a realizing matrix}  for $\sigma$. A classical result of Perron and Frobenius tells us that a realizable list of complex numbers has to contain a nonnegative real number that is greater than
or equal to the absolute value of any other number in the list. This number is
called the \emph{Perron eigenvalue}. Since for any nonnegative matrix $A$ the trace of
$A^k,$ $k=1,2,\ldots,$ is nonnegative, the following conditions must hold:
 $$s_k(\sigma)=\lambda_1^k+\lambda_2^k+\ldots+\lambda_n^k \geq 0.$$
In \cite{MR0480563} and independently in \cite{MR618581}, JLL inequalities were
proved. They tell us that a realizable list of $n$ complex numbers $\sigma$
satisfies
  \begin{equation}\label{JLL}
 n^{k-1}s_{km}(\sigma)\geq s_{m}^k(\sigma)
 \end{equation}
 for all positive integers $k$ and $m.$ Necessary conditions that we mentioned above are sufficient only in the case where $n \leq 3,$ \cite{MR0480563}. A solution for $n=4$ appears in the PhD thesis of Meehan \cite{Meehan}, and a different solution in terms of the coefficients of the characteristic polynomial is given in \cite{MR2350690}. 

While we are far from the complete solution to the NIEP, its variation, the problem of characterizing the nonzero spectra of nonnegative matrices, was solved by Boyle and Handelman in \cite{MR1097240}. We say that a list
of $n$ complex numbers $\sigma$ is the \emph{nonzero spectrum} of a nonnegative matrix, if
there exists a nonnegative integer $N$ such that $\sigma$ together with $N$
zeros added to it, is the spectrum of some $(n+N)\times (n+N)$
nonnegative matrix. Boyle and Handelman proved the following result.

\begin{theorem}(\cite{MR1097240}) \label{BH91}
A list of complex numbers $\sigma=(\lambda_1,\lambda_2,\ldots,\lambda_n)$ is
the nonzero spectrum of some nonnegative matrix if  the following conditions
hold:
\begin{enumerate}
\item $\sigma$ has a Perron eigenvalue $\lambda_1$ with $\lambda_1 >
|\lambda_i|$ for $i=2,\ldots,n.$
\item $\sigma$ is closed under complex conjugation.
\item For all positive integers $m$,
 $$s_m(\sigma)\geq 0,$$
and
 $s_m(\sigma)>0$ implies $s_{mk}(\sigma)>0$ for all positive integers $k.$
\end{enumerate}
\end{theorem}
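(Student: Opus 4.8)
This is the sufficiency half of the Boyle--Handelman theorem, and I would prove it constructively, using the matrices (\ref{Sn}) that organise the rest of the paper. After rescaling assume $\lambda_1=1$, and put $f(t)=\prod_{i=1}^{n}(1-\lambda_i t)$; by (2) this has real coefficients, $f(0)=1$, and by (1) its root $t=1$ is simple and of strictly smallest modulus. A nonnegative matrix $B$ of size $n+N_0$ has spectrum $\sigma$ together with $N_0$ zeros exactly when $\det(I-tB)=f(t)$, since a zero eigenvalue contributes only the trivial factor $1-0\cdot t$; so it suffices to exhibit an integer $M\ge n$ and reals $x_1,\dots,x_M\ge 0$ with $\det(I_M-tX_M)=f(t)$, $X_M$ being of the shape (\ref{Sn}), which is automatically entrywise nonnegative.

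The first step is to distil from (1)--(3) the power-series positivity that the construction will consume. Condition (3) gives $s_m(\sigma)\ge 0$ for all $m$, hence $-\log f(t)=\sum_{m\ge1}\frac{s_m(\sigma)}{m}t^m$ has nonnegative coefficients, and therefore so does $f(t)^{-r}$ for every $r>0$. I would then upgrade this: for a suitable $N$ the series $1-f(t)^{1/N}$ has nonnegative coefficients as well. Expanding $1-f(t)^{1/N}=1-\exp\!\big(\tfrac1N\log f(t)\big)$, the coefficient of $t^m$ equals $\frac{s_m(\sigma)}{mN}+O(N^{-2})$, positive for large $N$ whenever $s_m(\sigma)>0$; the residual work concerns the set $Z=\{m:s_m(\sigma)=0\}$, which is finite — because strict dominance of $\lambda_1$ forces $s_m(\sigma)\to1$ — and closed under taking divisors, by the implication $s_m(\sigma)>0\Rightarrow s_{mk}(\sigma)>0$ in (3); one checks that at the indices of $Z$ the cross-terms that could give a negative coefficient in fact vanish. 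This is essentially the one place the strict part of (3) is used, and for the special polynomials $f=\prod(1-\lambda_it)$ it is a finite verification; I would isolate it as a lemma. Write then $f(t)^{1/N}=1-g(t)$, $g(t)=\sum_{m\ge1}c_mt^m$ with $c_m\ge 0$ (and $g(1)=1$, since $f(1)=0$).

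The real content is the passage from $g$ to the matrix. The Toeplitz-below-the-diagonal, superdiagonal-$(1,2,\dots,M-1)$ shape of (\ref{Sn}) makes the coefficient of $t^k$ in $\det(I_M-tX_M)$ a nonzero constant times $x_k$ plus a polynomial in $x_1,\dots,x_{k-1}$; hence the system coming from $\det(I_M-tX_M)=f(t)$ — prescribe the coefficients of $t,\dots,t^n$ and force those of $t^{n+1},\dots,t^{M}$ to vanish — has a unique solution $(x_1,\dots,x_M)$, computed recursively. What remains is to show that for $M$ large this solution lies in the nonnegative orthant: already $x_2\ge 0$ is precisely the JLL inequality (\ref{JLL}), $M\,s_2(\sigma)\ge s_1(\sigma)^2$, for the enlarged size $M$, and in general the obstructions to $x_i\ge 0$ are of JLL type at size $M$ together with the positivity of the $c_m$; crucially, the finitely many JLL inequalities relevant to solvability all hold once $M$ is large, precisely because (3) forbids $s_m(\sigma)>0$ with $s_{mk}(\sigma)=0$. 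I expect this last point — that the recursively-determined $x_i$ really are $\ge 0$, equivalently that the family (\ref{Sn}) realises every $f$ whose associated series $1-f^{1/N}$ has nonnegative coefficients — to be the main obstacle. It is exactly here that the theorem of this paper, that $1-\det(I_M-tX_M)^{1/M}$ has nonnegative coefficients as a polynomial in $x_1,\dots,x_M$, enters: it makes the inverse problem consistent and supplies the monotonicity needed to carry the recursion through to a nonnegative solution.
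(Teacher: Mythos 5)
This statement is quoted background: the paper gives no proof of Theorem \ref{BH91}, which is the deep sufficiency half of Boyle--Handelman \cite{MR1097240}, proved there by symbolic-dynamics methods; the only known constructive route in the spirit you propose is Laffey's Theorem \ref{BHL} \cite{Laffey20121701}, and that theorem has strictly stronger hypotheses ($s_1\ge 0$ and $s_m>0$ for all $m\ge 2$) than condition (3) here, which permits $s_m=0$ on a divisor-closed set of exponents. Your argument also runs the paper's logic backwards, and this is where it genuinely fails. Theorem \ref{main} states that $1-\det(I_M-tX_M)^{1/M}$ is monomially positive \emph{as a polynomial identity in nonnegative variables} $x_1,\dots,x_M$; it says nothing about the inverse problem you need, namely that the unique $(x_1,\dots,x_M)$ solving $\det(I_M-tX_M)=f(t)$ (for $M$ large) is itself nonnegative whenever $1-f^{1/N}$ has nonnegative coefficients. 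That converse is exactly the hard analytic content of Laffey's constructive proof (and it is delicate: $x_2\ge0$ is indeed the JLL inequality (\ref{JLL}) at size $M$, but the nonnegativity of the later $x_i$ is not "of JLL type" and requires careful asymptotics in $M$ under the hypothesis $s_m>0$, $m\ge2$). Declaring this step "the main obstacle" and asserting that Theorem \ref{main} "supplies the monotonicity needed" is not a proof; in the paper the implication goes the other way, with Theorem \ref{BHL} used as an ingredient to prove Theorem \ref{NIEPappl}.

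The earlier "upgrade" step has a second gap. The expansion of the coefficient of $t^m$ in $1-f(t)^{1/N}$ as $\frac{s_m(\sigma)}{mN}+O(N^{-2})$ is valid for each fixed $m$, but the error term is not uniform in $m$, so it does not yield a single $N$ making all infinitely many coefficients nonnegative; the statement you are trying to dispatch as "a lemma with finite verification" is essentially Theorem \ref{NIEPappl} itself, whose only proof in the paper passes through Theorem \ref{BHL} (hence through a version of the theorem you are proving) and which, moreover, is stated only under $s_m>0$ for all $m$, not under condition (3). The treatment of the zero set $Z=\{m: s_m(\sigma)=0\}$ ("the cross-terms vanish") is likewise asserted rather than shown, and it is precisely in accommodating such $\sigma$ that Boyle--Handelman's argument goes beyond anything available from the machinery of this paper. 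A correct proof must either follow \cite{MR1097240} or reproduce the substantial analysis of \cite{Laffey20121701}, and in the latter case it still would not cover the full generality of the hypotheses stated here.
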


Under the assumptions of the theorem, a realizing matrix can be chosen to be primitive, and in this case the conditions are necessary and sufficient. See Friedland \cite{Friedland:2009fk} for an extension to the irreducible case. 

The proof of Theorem \ref{BH91} in \cite{MR1097240} is not algorithmic and does not provide a bound on the minimal number $N$ of zeros required for realizability. 
Laffey \cite{Laffey20121701} found a constructive approach to the Boyle and Handelman theorem using the matrix of the type (\ref{Sn}).
He proved the following result.

\begin{theorem}[\cite{Laffey20121701}]\label{BHL}
Let $\sigma=(\lambda_1,\lambda_2,\ldots,\lambda_n)$ be a list of complex numbers  that satisfy
\begin{enumerate}
\item $\lambda_1>|\lambda_j|$ for $j=2,\ldots,n.$
\item $\sigma$ is closed under complex conjugation.
\item $s_1(\sigma) \geq 0$ and $s_m(\sigma)>0$ for all $m \geq 2.$
\end{enumerate}
Then there exists a positive integer $N$ such that $\sigma$ with $N$ zeros added is the spectrum of a nonnegative matrix $X_{n+N}$ defined in (\ref{Sn}) for some nonnegative real numbers $x_i,$ $i=1,2,\ldots,n+N$.
\end{theorem}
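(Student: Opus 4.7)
The plan is to realize $\sigma$ together with $N$ zeros as the spectrum of $X_M$, where $M:=n+N$, by extracting the parameters $x_1,\dots,x_M$ from the power sums of $\sigma$ and then verifying $x_i\ge 0$. Since $\sigma\cup\{0\}^N$ has the same power sums as $\sigma$, and since the characteristic polynomial of an $M\times M$ matrix is determined by its first $M$ power sums via Newton's identities, the task reduces to finding nonnegative $x_1,\dots,x_M$ with $s_k(X_M)=s_k(\sigma)$ for $k=1,\dots,M$.

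The first ingredient I would derive is an explicit formula for the power sums of $X_M$. Interpreting $s_k(X_M)=\trace(X_M^k)$ combinatorially as a sum over closed walks of length $k$ in the weighted digraph of $X_M$, one identifies the walks contributing the monomial $x_k$ linearly: they consist of $k-1$ consecutive ``up'' steps along the superdiagonal and a single ``down'' jump of weight $x_k$ placed at one of $k$ cyclic positions. Enumerating these walks by their top vertex and cyclic shift (using the hockey-stick identity) yields a coefficient of $k!\binom{M}{k}=M!/(M-k)!$, so that for $1\le k\le M$,
\[
 s_k(X_M) \;=\; \frac{M!}{(M-k)!}\,x_k \;+\; Q_k(x_1,\dots,x_{k-1};M),
\]
where $Q_k$ is a polynomial with nonnegative integer coefficients collecting the walks that use at least one $x_j$-edge with $j<k$. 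Inverting this triangular system recursively gives
\[
 x_k \;=\; \frac{s_k(\sigma) - Q_k(x_1,\dots,x_{k-1};M)}{M!/(M-k)!}, \qquad k=1,\dots,M,
\]
uniquely determined by $\sigma$ and $M$, and producing the $X_M$ whose first $M$ power sums coincide with those of $\sigma$.

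The remaining step, and the main obstacle, is to show that $M$ (equivalently $N$) can be chosen so that every $x_k$ comes out nonnegative. For $k=1$ this is automatic from hypothesis~(3), since $x_1=s_1(\sigma)/M\ge 0$. For each fixed $k\ge 2$, an induction using $s_k(\sigma)>0$ together with the leading-order estimate $x_j=O((\lambda_1/M)^{j})$ would give $x_k>0$ once $M$ is sufficiently large: the dominant term is $s_k(\sigma)/M^k$ while $Q_k$ is of lower order in $1/M$. The hard case is $k$ comparable to $M$, where $Q_k$ has combinatorially many monomials and the simple asymptotic argument breaks down. Here the Perron dominance hypothesis~(1) is essential: it delivers an exponential lower bound $s_k(\sigma)\ge \tfrac12\lambda_1^k$ for all sufficiently large $k$, which has to be matched against a combinatorial upper bound for $Q_k$ of the same exponential order $\lambda_1^k$ but multiplied by a factor decaying in $M$, coming from the fact that every monomial in $Q_k$ has positive total degree in the $x_j$'s. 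Establishing such a uniform-in-$k$ estimate on $Q_k$ throughout the whole range $1\le k\le M$ is the technical heart of the proof and fixes the threshold for $N$.
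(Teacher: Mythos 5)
This theorem is stated in the paper as a citation to \cite{Laffey20121701}; the paper does not reprove it, though it does record (Proposition 2.3) the key structural fact about $X_n$ on which the original proof rests. Your overall strategy is aligned with Laffey's: choose $M=n+N$, determine $x_1,\dots,x_M$ from $\sigma$ by a triangular system, and argue that all $x_k$ become nonnegative once $M$ is large. Your combinatorial derivation of the linear contribution $\tfrac{M!}{(M-k)!}x_k$ to $\trace(X_M^k)$ via cycle-counting and the hockey-stick identity is correct, and it is consistent with Proposition 2.3 after passing through Newton's identities.

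However, there is a genuine gap, and you flag it yourself: the positivity of $x_k$ when $k$ is of the same order as $M$ is asserted only as something that ``has to be matched,'' not established. This is not a routine estimate. Your direct route, bounding $Q_k(x_1,\dots,x_{k-1};M)$ as a polynomial with combinatorially many nonnegative monomials evaluated at the recursively determined $x_j$'s, is substantially harder to control than the route actually taken in \cite{Laffey20121701}, which passes through Proposition 2.3: set $q_i=\tfrac{(M-i)!}{M!}p_i$ for $i\le n$ (and $q_i=0$ otherwise), let $\mu_1,\dots,\mu_n$ be the nonzero roots of $x^n+q_1x^{n-1}+\cdots+q_n$, and take $x_k=\sum_j\mu_j^k$. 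Under the rescaling $y=Mx$ the $\mu_j$ converge to $\lambda_j/M$, so one shows that for $M$ large the rescaled root set inherits a strict Perron dominance from $\sigma$; this gives $x_k=\sum_j\mu_j^k>0$ uniformly in $k$, including the troublesome range $k\sim M$, without having to bound $Q_k$ monomial by monomial. That reduction to a fixed-size ($n$ nonzero) auxiliary list and the transfer of the Perron gap to it is precisely the ingredient your sketch lacks, and without some substitute for it the threshold $N$ cannot be extracted from your setup.
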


Furthermore, a bound on the minimal number of zeros $N$ needed to be added is presented in \cite{Laffey20121701}.
One of the main observations needed to prove Theorem \ref{BHL} is the following proposition.

\begin{proposition}[\cite{Laffey20121701}]
Let $(\mu_1,\mu_2,\ldots,\mu_n)$ be a list of complex numbers and let 
 \begin{equation}\label{qq} 
  q(x):=\prod_{j=1}^n (x-\mu_j)=x^n+q_1 x^{n-1}+\ldots + q_n.
 \end{equation} 
If we put $x_k:=\sum_{j=1}^n \mu_j^k$ in (\ref{Sn}), then:  
\begin{equation}\label{charpoly}
\det(xI_n- X_n)=x^n+nq_1x^{n-1}+n(n-1)q_2 x^{n-2}+\ldots+n!q_n.
\end{equation}
\end{proposition}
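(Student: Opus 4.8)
The plan is to reduce (\ref{charpoly}) to a recursion relating the characteristic polynomials of the leading principal submatrices of $X_n$, and then to solve that recursion. Write $X_m$ for the $m\times m$ leading principal submatrix of $X_n$ (which is again of the form (\ref{Sn}), with parameters $x_1,\dots,x_m$ and superdiagonal entries $1,2,\dots,m-1$), and set $p_m(x):=\det(xI_m-X_m)$, with the conventions $p_0(x):=1$ and $q_0:=1$. It then suffices to prove $p_n(x)=\sum_{k=0}^{n}\frac{n!}{(n-k)!}\,q_k\,x^{n-k}$.

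First I would establish the recursion
$$p_m(x)=x\,p_{m-1}(x)-\sum_{i=0}^{m-1}\frac{(m-1)!}{i!}\,x_{m-i}\,p_i(x),\qquad m\geq 1.$$
This follows from a Laplace expansion of $\det(xI_m-X_m)$ along its last row $(-x_m,-x_{m-1},\dots,-x_2,\,x-x_1)$. The key point is that for $j<m$ the submatrix obtained by deleting row $m$ and column $j$ is block lower triangular: its top-left $(j-1)\times(j-1)$ block is $xI_{j-1}-X_{j-1}$, while its bottom-right $(m-j)\times(m-j)$ block is itself lower triangular, with diagonal entries $-j,-(j+1),\dots,-(m-1)$ — namely the superdiagonal entries of $X_m$ surviving the deletion. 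Hence the associated minor equals $(-1)^{m-j}\frac{(m-1)!}{(j-1)!}\,p_{j-1}(x)$; adding the contribution $(x-x_1)p_{m-1}(x)$ from the last column and reindexing gives the displayed recursion.

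Second I would solve the recursion by passing to the exponential generating function $G(t):=\sum_{m\geq 0}p_m(x)\,t^m/m!$. The recursion becomes the formal power series identity $t\,G'(t)=\big(xt-X(t)\big)G(t)$, where $X(t):=\sum_{k\geq 1}x_k t^k=\sum_{j=1}^{n}\mu_j t/(1-\mu_j t)$. Since $X(t)=-t\,Q'(t)/Q(t)$ for $Q(t):=\prod_{j=1}^{n}(1-\mu_j t)=1+q_1 t+\dots+q_n t^n$ — which is exactly the generating-function form of Newton's identities — the identity rearranges to $(G/Q)'=x\,(G/Q)$, and $G(0)=1=Q(0)$ forces $G(t)=e^{xt}Q(t)$. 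Reading off the coefficient of $t^n$ in $e^{xt}Q(t)=\big(\sum_{a\geq 0}x^a t^a/a!\big)\big(\sum_{b=0}^{n}q_b t^b\big)$ yields $p_n(x)/n!=\sum_{b=0}^{n}q_b\,x^{n-b}/(n-b)!$, which is the asserted formula. (Alternatively one can avoid generating functions and prove $p_m(x)=\sum_{k=0}^{m}\frac{m!}{(m-k)!}q_k x^{m-k}$ by induction on $m$, the inductive step invoking $\sum_{l=0}^{k-1}q_l x_{k-l}=-k q_k$, i.e. the $k$-th Newton identity, available since $k\leq m\leq n$.)

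I expect the main obstacle to be the first step: correctly identifying the block structure of the minors in the Laplace expansion and keeping track of the resulting factorials and signs. The superdiagonal entries $1,2,\dots,n-1$ of $X_n$ are precisely what produce the factorial weights $n!/(n-k)!$, so that bookkeeping must be done carefully; once the recursion is in hand, the rest is routine, the only genuine input being the recursive (equivalently, generating-function) form of Newton's identities.
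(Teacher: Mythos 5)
Your proof is correct. The paper states this proposition without proof, citing \cite{Laffey20121701}, so there is no in-text argument to compare against line by line; but the recursion you derive in your first step is exactly Lemma~\ref{expansion}(1) of the paper, which is established there by the very same Laplace expansion along the last row, so that step aligns with what the paper already contains. Your bookkeeping of the minors is accurate: deleting row $m$ and column $j$ does leave a block lower-triangular matrix whose top-left block is $xI_{j-1}-X_{j-1}$ and whose bottom-right block is lower triangular with diagonal $-j,-(j+1),\dots,-(m-1)$, contributing the factor $(-1)^{m-j}(m-1)!/(j-1)!$, and the signs combine with the cofactor sign $(-1)^{m+j}$ and the entry $-x_{m+1-j}$ to give the stated recursion. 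Your second step, solving the recursion via the exponential generating function $G(t)=\sum_{m\ge 0}p_m(x)\,t^m/m!$, the formal ODE $tG'(t)=(xt-X(t))G(t)$, and the identity $X(t)=-tQ'(t)/Q(t)$ (which packages Newton's identities), correctly yields $G(t)=e^{xt}Q(t)$ and hence (\ref{charpoly}) on reading off the coefficient of $t^n$. The Newton-identity induction you sketch as an alternative, using $\sum_{l=0}^{k-1}q_l x_{k-l}=-kq_k$ for $k\le n$, is equally valid. This is a complete and clean derivation.
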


Consider a polynomial:
 $$F(x)=(x-\lambda_1) (x-\lambda_2)\ldots (x-\lambda_n)=x^n+p_1x^{n-1}+\ldots+p_n.$$
If the coefficients $p_i \leq 0$, the companion matrix $C(F)$ of $F(x)$ has nonnegative entries and realizes $(\lambda_1, \lambda_2, \ldots, \lambda_n)$. While this condition on $F(x)$ is quite restrictive, it does occur in some interesting situations. For example, the first major result on the NIEP was obtained by Suleimanova \cite{MR0030496}, who proved that if $\sigma$ consists only of real numbers and
 $\lambda_1>0$ and $\lambda_i \leq 0,$
$i=2,\ldots,n,$ then $\sigma$ is realizable if and only if 
$$s_1(\sigma)=\lambda_1+\lambda_2+\ldots+\lambda_n\geq 0.$$ 
Friedland \cite{MR492634} obtained an elegant proof of this result by establishing that the companion matrix of $F(x)$ is nonnegative in this case. 

Laffey and \v Smigoc \cite{MR2232926} proved that if all elements of $\sigma$ other than its Perron element have non-positive real parts, then $\sigma$ is realizable if and only if $\sigma$ is realizable by a matrix of the form $C+\alpha I,$ where $\alpha \geq 0$ and $C$ is a nonnegative companion matrix. They gave a complete easy to verify characterization of such $\sigma$.. 

More generally, they also considered realizations using matrices of the form
\begin{equation}\label{CT}
\npmatrix{C(f_1) & N_1 & 0 & \ldots & 0 \\
                        0 & C(f_2) & N_2 & \ddots & \vdots \\
                       \vdots & \ddots & \ddots & \ddots & 0 \\
                       0 & \ldots & 0 & \ddots & N_{k-1} \\
                       R_1& \ldots & R_{k-2} & R_{k-1} & C(f_k)},
                       \end{equation}
                       
 where 
 \begin{itemize}
\item $C(f_i)$ denotes the companion matrix of polynomial $f_i$ for $i=1,2,\ldots, k$,
\item $N_i$ denotes a matrix of an appropriate size that has the element in the lower left corner equal to $1$ and all other elements equal to zero for $i=1,\ldots,k-1,$ 
\item $R_i$ denotes a matrix of an appropriate size whose elements are all equal to zero except possibly those on the last row for $i=1,\ldots,k-1$.
\end{itemize}
They have used such matrices to improve known bounds for realizability in the NIEP \cite{MR2420980,MR2742335}. They have also observed that a large class of spectra can be realized in this way. 

In establishing a conjecture of Boyle and Handelman on the realizability of spectra by nonnegative integer matrices Kim, Ormes and Roush  \cite{MR1775737} introduced a formal factorization of 
 $$f(t)=\prod_{j=1}^n (1-\lambda_j t)$$
in the form 
 $$f(t)=g_1(t)g_2(t)\ldots g_k(t)r(t),$$
 where $1-g_j(t)$ are polynomials with nonnegative coefficients and $1-r(t)$
is a formal power series with nonnegative coefficients, in order to get realizations over the semiring $\mathbb{Z}_+[t]$ of polynomials with nonnegative integer coefficients. This enabled them to deduce realizations over the nonnegative integers. It turns out that one can obtain realizations of block-companion type (\ref{CT}) above in this case.  

The results mentioned above explain how the nonnegativity of coefficients of certain polynomials related to the characteristic polynomial $F(x)$ can be used effectively in the NIEP. This led to considering finding such results for general realizable $\sigma$. In particular, to the question: if 
 $$f(t)=(1-\lambda_1 t)(1-\lambda_2 t)\ldots (1-\lambda_n t),$$
 where $(\lambda_1,\lambda_2,\ldots,\lambda_n)$ is a realizable list, does $1-f(t)^{1/n}$ have nonnegative coefficients? The answer is "No," in general. However, we observed that in all cases we tested, $1-f(t)^{1/N}$ had nonnegative coefficients for all sufficiently large positive integers $N$. Thus we were led to a conjecture that this always occurs. In this paper we prove the conjecture for lists $\sigma$ having a Perron element $\rho=\lambda_1 > |\lambda_j|$, $j=2,\ldots, n$, and having its Newton power sums $s_k(\sigma)>0$ for $k \geq 2$. The proof is quite indirect and involves the analysis of the matrices $X_n$ above. Several interesting properties of these matrices are uncovered and used in the proof.

\section{Power series with positive coefficients}\label{pswpc} 

Let $\sigma=(\lambda_1,\lambda_2,\ldots,\lambda_n)$ be a list of complex numbers.
Let us define 
\begin{equation}\label{Ff}
F(x)=\prod_{i=1}^n(x-\lambda_i) \text{ and } f(t)=t^nF(\frac{1}{t}).
\end{equation}
Notice that $f(t)$ depends only on the nonzero elements of $\sigma.$ Observe that the companion matrix of $F(x)$ is nonnegative if and only if $1-f(t)$ has nonnegative coefficients.  While this is true only for some realizable lists $\sigma$, we have noticed that in many cases $1-f(t)^{1/n}$ has positive coefficients. In particular, the following result was conjectured and proved by Laffey \cite{LaffeyPos}. Different proofs have been found by Aharonov \cite{AharonovPos}, Holland \cite{HollandPos} and Kova\v cec \cite{KovacecPos}.

\begin{theorem}[\cite{LaffeyPos}]\label{positive}
Let $\lambda_1,\lambda_2,\ldots, \lambda_n$ be positive real numbers and let  
 $$f(t)=\left(\prod_{i=1}^n (1-\lambda_i t)\right)^{1/n}.$$
Then $1-f(t)$ has nonnegative coefficients.  
\end{theorem}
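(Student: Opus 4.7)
The plan is to derive a coefficient recursion for $f(t) = \sum_{m \ge 0} c_m t^m$ from the logarithmic derivative, and then to prove $c_m \le 0$ for $m \ge 1$ (equivalently $d_m := -c_m \ge 0$) by exhibiting $d_m$ as a polynomial in the $\lambda_i$'s with manifestly nonnegative coefficients.

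First I would compute $-f'(t)/f(t) = \frac{1}{n}\sum_i \frac{\lambda_i}{1 - \lambda_i t} = \sum_{k \ge 0} \frac{s_{k+1}}{n}\,t^k$, where $s_k = \sum_i \lambda_i^k > 0$. Multiplying by $n t f(t)$ and matching the $t^m$-coefficient yields the recursion
\[
 nm\,d_m \;=\; s_m \;-\; \sum_{j=1}^{m-1} d_j\, s_{m-j},\qquad m \ge 1.
\]
The base cases go through cleanly: $d_1 = s_1/n > 0$, and $d_2 = (n s_2 - s_1^2)/(2n^2) = \sum_{i < j}(\lambda_i - \lambda_j)^2/(2n^2) \ge 0$ by Cauchy--Schwarz. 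A further unwinding using Newton's identities gives $6 n^3 d_3 = 2 n^2 s_3 - 3 n s_1 s_2 + s_1^3$, which for $n = 2$ factors as $3(\lambda_1 - \lambda_2)^2(\lambda_1 + \lambda_2)$. In each case $d_m$ vanishes on the diagonal $\lambda_1 = \cdots = \lambda_n$ (where $f(t) = 1 - \bar{\lambda} t$ and only $d_1$ survives), strongly suggesting that each $d_m$ decomposes as a sum of products of squared differences $(\lambda_i - \lambda_j)^2$ with nonnegative monomials in the $\lambda_i$.

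The principal obstacle is that the recursion above is circular as a direct induction: assuming $d_j \ge 0$ for $j < m$ does not imply $d_m \ge 0$, since the needed bound $\sum_{j=1}^{m-1} d_j\, s_{m-j} \le s_m$ is itself equivalent to $d_m \ge 0$. To close the argument, the plan is to expand $f = e^{-H}$ with $H(t) = \sum_{k \ge 1} \frac{s_k}{nk}\,t^k$ via the exponential formula, obtaining a signed sum indexed by set partitions of $m$; then substitute $s_k = \sum_i \lambda_i^k$ and regroup contributions of opposite parity so that the alternating-sign pieces collapse into a manifestly nonnegative symmetric polynomial. This reduces the theorem to a family of sum-of-squares type identities for symmetric functions of positive variables, and producing such an explicit positive decomposition uniformly in $m$ is the main technical difficulty.

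A complementary route, more in line with the matrix machinery featured in the rest of the paper, would exploit the Hessenberg matrix $X_n$ of \eqref{Sn} with $x_k = s_k$: such $X_n$ is entrywise nonnegative, and one would seek to identify $d_m$ with a trace-type or principal-minor quantity built from powers of $X_n$, so that its nonnegativity follows immediately from the nonnegativity of the entries of $X_n^m$.
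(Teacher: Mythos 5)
The paper itself does not prove Theorem \ref{positive}: it is cited from Laffey's paper \cite{LaffeyPos}, with independent proofs attributed to Aharonov, Holland and Kova\v cec, and it is then used as a black box (e.g.\ in the proof of Proposition 4.1). So there is no internal argument to compare your proposal against; the relevant question is whether your outline constitutes a proof on its own, and it does not.

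Your setup is correct and clean: the recursion $nm\,d_m = s_m - \sum_{j=1}^{m-1} d_j s_{m-j}$ is exactly the Newton-type identity that drops out of $-f'/f = \frac1n\sum_i \lambda_i/(1-\lambda_i t)$, your base cases $d_1, d_2, d_3$ check out, and you correctly diagnose why the naive induction is circular. But the paragraph that is meant to close the argument --- expand $f = e^{-H}$ by the exponential formula, regroup by parity, and collapse to a manifestly nonnegative symmetric polynomial --- is an \emph{aspiration}, not a proof. You yourself write that producing such a positive decomposition uniformly in $m$ ``is the main technical difficulty,'' and nothing in the outline addresses it. The observations that $d_2 = \sum_{i<j}(\lambda_i-\lambda_j)^2/(2n^2)$ and that $6n^3 d_3$ factors nicely for $n=2$ are suggestive but do not establish the existence of a nonnegative (let alone sum-of-squares) representation for general $m$ and $n$. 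As it stands this is a gap, not a proof.

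Your ``complementary route'' via the matrix $X_n$ also does not work as sketched, and the obstruction is visible in the paper's own Proposition 2.1. If you set $x_k = s_k = \sum_i\lambda_i^k$ in $X_n$, the resulting characteristic polynomial is $x^n + nq_1 x^{n-1} + n(n-1)q_2 x^{n-2} + \cdots + n!q_n$, where the $q_j$ are the coefficients of $\prod(x-\lambda_i)$. Consequently $\det(I_n - tX_n)$ is \emph{not} $\prod(1-\lambda_i t)$; the $t^j$-coefficient is multiplied by $n!/(n-j)!$. Applying Theorem \ref{main} to this $X_n$ therefore yields positivity of $1 - \det(I_n - tX_n)^{1/n}$ for a different polynomial, not for the $f(t)$ of Theorem \ref{positive}. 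One could instead try to pick $x_1,\ldots,x_n$ so that $\det(I_n - tX_n) = \prod(1-\lambda_i t)$ exactly (the system is triangular in the $x_j$), but the resulting $x_j$ are generally not all nonnegative, and Theorem \ref{main} gives sign information only after substituting nonnegative values for the $x_j$. So neither of your two routes is carried out, and the second one as described cannot be.
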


Let $(\lambda_1,\lambda_2,\ldots,\lambda_n)$ be a realizable list of complex numbers and let $$f(t)=\prod_{i=1}^n(1-\lambda_i t).$$ The following examples show that $1-f(t)^{1/n}$ need not have nonnegative coefficients.

\begin{example}
Let $$\sigma=(1,\frac{9}{10},-\frac{9}{10}) \text{ and }f(t)=(1-t)(1-\frac{9}{10}t)(1+\frac{9}{10}t).$$ The 
list $\sigma$ is realizable, but the power series expansion of $1-f(t)^{1/3}$  does not have all its coefficients nonnegative. The power series expansion of $1-f(t)^{1/4}$ has positive coefficients. 
\end{example}

\begin{example}
Let $$\sigma=(1,\frac{99}{100},-\frac{99}{100})\text{ and }f(t)=(1-t)(1-\frac{99}{100}t)(1+\frac{99}{100}t).$$ The 
list $\sigma$ is realizable, but the power series expansion of $1-f(t)^{1/k},$ $k=3,4,5$,  does not have all its coefficients nonnegative. The power series expansion of $1-f(t)^{1/6}$ has positive coefficients. 
\end{example}

Let $F(x)$ be the characteristic polynomial of an $n \times n$ positive matrix $A$ and let $f(t)$ be as defined in (\ref{Ff}). In this work we will prove that there always exists a positive integer $N$ such that $1-f(t)^{1/N}$ has nonnegative coefficients. 

\begin{definition}
Let $F(t)=\sum_{t=0}^{\infty} \Gamma_i t^i$ be a polynomial in $t$ or a formal power series expansion of $F(t)$ around zero, where  the coefficient $\Gamma_j$ of $t^j$ is a polynomial in $x_1, x_2, \ldots,x_n$ for all powers $t^j$: 
$$\Gamma_j=\sum_{(\alpha_1, \alpha_2,\ldots \alpha_n) \in (\mathbb{N}\cup \{0\})^n} x_1^{\alpha_1}x_2^{\alpha_2}\ldots x_n^{\alpha_n}\beta(\alpha_1,\alpha_2,\ldots,\alpha_n),$$ $\beta(\alpha_1,\alpha_2,\ldots,\alpha_n) \in \mathbb{\R}$. We say that $F(t)$ is \emph{monomially positive} if the coefficients $\beta(\alpha_1,\alpha_2,\ldots,\alpha_n)$ are nonnegative.
\end{definition}

\begin{example}
The polynomial $p_0(t)=t+(x_1^2-x_2)^2t^4$ is not monomially positive.
\end{example}

Let $X_n$ be the matrix defined in (\ref{Sn}) and let us define $$F_n(x)=\det(x I_n-X_n).$$ Furthermore, let $f_0(t)=1$, and for $n \geq 1$
$$f_n(t)=\det(I_n-t X_n)$$
be the polynomial obtained from $F_n(x)$ by $f_n(t)=t^nF_n(\frac{1}{t})$.

Notice that $f_n(t)$ is a polynomial in $t$ of degree $n$
$$f_n(t)=1-\sum_{j=1}^n \hat{\gamma}_j t^j,$$
where the coefficients $\hat{\gamma}_j$ of $t$ are in turn multivariable polynomials in $x_1, x_2, \ldots,x_n$:
$$\hat{\gamma}_j=\sum_{\alpha_1+2 \alpha_2+\ldots + n \alpha_n=j} x_1^{\alpha_1}x_2^{\alpha_2}\ldots x_n^{\alpha_n}\hat{\beta}(\alpha_1,\alpha_2,\ldots,\alpha_n) \in \R[x_1,x_2,\ldots,x_n].$$
To explain the sum under the summation sign in the above formula we note that in the expansion of the determinant of $(I_n-t X_n)$ every occurrence of $x_j$ is accompanied with $t^j$, so we can think of $x_j$ as being associated with the weight $j$. 
Now let us look at the formal power series expansion of $(f_n(t))^{1/n}:$
\begin{align*} 
 (f_n(t))^{1/n}&=1-\sum_{j=1}^{\infty} \gamma_j t^j,\\
\gamma_j&=\sum_{\alpha_1+2 \alpha_2+\ldots + n \alpha_n=j} x_1^{\alpha_1}x_2^{\alpha_2}\ldots x_n^{\alpha_n}\beta(\alpha_1,\alpha_2,\ldots,\alpha_n).
\end{align*}
We will show that the coefficients $\beta(\alpha_1,\alpha_2,\ldots,\alpha_n)$ are nonnegative; or equivalently
that $1-(f_n(t))^{1/n}$ is monomially positive.

\begin{theorem}\label{main}
Let $f_n(t)=\det(I_n-t X_n),$ where $X_n$ is defined in (\ref{Sn}). Then
$$1-(f_n(t))^{1/n}$$ is monomially positive. 
\end{theorem}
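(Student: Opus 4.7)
The starting point is the logarithmic identity
\[ \log f_n(t) \;=\; -\sum_{k\ge 1}\frac{\operatorname{tr}(X_n^k)}{k}\,t^k, \]
which, on dividing by $n$ and exponentiating, gives
\[ f_n(t)^{1/n} \;=\; \exp\!\Bigl(-\sum_{k\ge 1} \frac{\tau_k}{k}\,t^k\Bigr), \qquad \tau_k:=\frac{\operatorname{tr}(X_n^k)}{n}. \]
The trace vector $(\tau_1,\tau_2,\dots)$ is the main tool. A first observation is that each $\tau_k$ is a polynomial in $x_1,\dots,x_n$ with nonnegative coefficients: $\operatorname{tr}(X_n^k)$ counts length-$k$ closed walks on the weighted digraph of $X_n$, whose loops carry weight $x_1$, whose up-edges $i\to i{+}1$ carry the positive integer $i$, and whose down-edges $i\to j$ carry $x_{i-j+1}$. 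I would first derive an explicit formula for $\tau_k$ by classifying closed walks according to their step-type pattern; one obtains an expansion $\tau_k = \sum_{\lambda\vdash k} c_\lambda(n)\, x_1^{a_1}x_2^{a_2}\cdots$ with $c_\lambda(n)$ nonnegative integers. For example $\tau_1=x_1$, $\tau_2=x_1^2+(n-1)x_2$, and $\tau_3=x_1^3+3(n-1)x_1x_2+(n-1)(n-2)x_3$.

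Let $g_j$ denote the coefficient of $t^j$ in $1-f_n(t)^{1/n}$. Differentiating the exponential formula yields the Newton-type recursion
\[ j\,g_j \;=\; \tau_j \;-\; \sum_{i=1}^{j-1} \tau_i\,g_{j-i}. \]
The plan is to prove monomial positivity of $g_j$ by induction on $j$: the base case $g_1=x_1$ is immediate, and the inductive step reduces to showing that $\tau_j-\sum_{i<j}\tau_i\,g_{j-i}$ has nonnegative coefficient on every monomial $x_1^{\alpha_1}\cdots x_n^{\alpha_n}$ of weight $j$. A useful sanity check is that setting $x_2=\dots=x_n=0$ makes $X_n$ upper-triangular with constant diagonal $x_1$, so $f_n(t)=(1-tx_1)^n$ and hence $g_j=0$ for $j\ge 2$; thus the pure $x_1^j$ contributions on the right-hand side cancel identically, and the content of the theorem is that an analogous cancellation holds monomial-by-monomial.

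The main obstacle will be controlling the subtraction in the recursion at the level of individual monomials, since arbitrary subtractions between monomially positive polynomials need not preserve positivity. I expect the proof to exploit a refined combinatorial identity: grouping the closed walks contributing to $\tau_k$ according to a first-return decomposition (or to the cycle structure of the permutations arising in the expansion of $\det(I_n-tX_n)$ catalogued earlier) should decompose $\tau_k$ into ``primitive'' building blocks whose exponential factors cleanly, precisely absorbing the negative terms in the recursion. An alternative route is to find a closed-form expression for $c_\lambda(n)$ in terms of multinomial coefficients and falling factorials in $n-1$, then compute the coefficient of each monomial in $g_j$ explicitly and deduce its nonnegativity from a combinatorial identity. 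Either way, the precise monomial-level cancellation between $\tau_j$ and $\sum_{i<j}\tau_i g_{j-i}$ is where the bulk of the technical work lies, consistent with the authors' remark that the proof is ``quite indirect.''
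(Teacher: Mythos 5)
Your proposal correctly identifies the logarithmic/exponential identity and the Newton-type recursion $j g_j = \tau_j - \sum_{i<j}\tau_i g_{j-i}$, and correctly diagnoses the obstacle (the subtraction), but it stops exactly there: no concrete mechanism is offered for controlling the cancellation monomial-by-monomial, only speculation that a ``refined combinatorial identity'' or a ``first-return decomposition'' should exist. That is a genuine gap, and it is not clear your proposed route would succeed at all: induction on the degree $j$ seems to require a direct inequality $\tau_j \geq \sum_{i<j}\tau_i g_{j-i}$ coefficient-wise, and there is no apparent structure that delivers it. Note also a small computational slip in the verifications: for instance $\tau_3 \neq x_1^3 + 3(n-1)x_1 x_2 + (n-1)(n-2)x_3$; counting closed $3$-walks gives $\tau_3 = x_1^3 + 3(n-1)x_1 x_2 + \tfrac{(n-1)(n-2)}{1}x_3$ only after correctly weighting the up-steps $i\to i{+}1$ by the integer on that edge, and the falling-factorial coefficient needs more care.

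The paper takes a fundamentally different route: it inducts on $n$ (the size of $X_n$), not on the coefficient index. The pivot is the last-row determinant expansion
\[ f_n(t) = (1-x_1 t)f_{n-1}(t) - \sum_{i=2}^{n}\tfrac{(n-1)!}{(n-i)!}\,x_i\, f_{n-i}(t)\,t^i, \]
which lets one write $f_n^{1/n} = \bigl((1-x_1t)f_{n-1}\bigr)^{1/n}\bigl(1 - V_n(t)\bigr)^{1/n}$ for a monomially positive $V_n$, and then apply the binomial series so that the minus sign in $(1-V_n)^{1/n}$ works \emph{for} you rather than against you. This turns the problem into showing that several auxiliary ratios, such as $f_{n-1}/f_n$ and $f_{n-1}/f_n^{1-1/n}$, are monomially positive. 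The crucial ingredient you are missing is the \emph{trace vector}: from $\tfrac{d}{dx}F_n = n F_{n-1}$ one shows $e_n$ is a trace vector for $X_n$, which yields $(X_n^k)_{nn} = \tau_k(n)$ and hence the monotonicity $\tau_k(n) - \tau_k(n-1) \in \R_+[x_1,\dots,x_n]$. That monotonicity, fed through $\log w_n(t) = (n-1)\sum_k \tfrac{(\tau_k(n)-\tau_k(n-1))}{k}t^k$, is what makes the auxiliary ratio $f_{n-1}/f_n^{1-1/n}$ monomially positive and allows the induction on $n$ to close. Without an analogue of this structural positivity, the Newton recursion alone does not appear to give a handle on the sign of $g_j$.

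Your sanity check (setting $x_2=\dots=x_n=0$ gives $g_j=0$ for $j\ge 2$) is correct and consistent with the theorem, but it also illustrates the difficulty: the cancellation is exact there, so any valid argument must be tight, and hand-waving about ``primitive building blocks'' does not yet constitute a proof.
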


Using Theorem \ref{BHL} and Theorem \ref{main}, we can prove the main result of this paper.

\begin{theorem}\label{NIEPappl}
Let $f(t)=\prod_{i=1}^n(1- \lambda_i t)$ be a polynomial that satisfies:
\begin{enumerate}
 \item $s_k=\sum_{i=1}^n \lambda_i^k >0$ for $k=1,2,\ldots$ 
 \item $\lambda_1 > |\lambda_i|$ for $i=2,3,\ldots,n.$ 
\end{enumerate} 
 Then there exists a positive integer $N_0$ so that $1-f(t)^{1/N}$ has positive coefficients for all $N > N_0.$ 
\end{theorem}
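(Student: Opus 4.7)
The plan is to combine Theorem~\ref{BHL} and Theorem~\ref{main} to obtain positivity of the coefficients of $1-f(t)^{1/M_0}$ at one particular exponent $M_0$, and then to extend this to every larger exponent by a generalized-binomial argument. To apply Theorem~\ref{BHL}, I first check its hypotheses for $\sigma=(\lambda_1,\ldots,\lambda_n)$: the assumption $s_k>0$ for every $k\geq 1$ forces all Newton power sums to be real, so the elementary symmetric functions of $\sigma$ are real and $\sigma$ is closed under complex conjugation. Together with the Perron assumption this is exactly what Theorem~\ref{BHL} requires, so it produces a nonnegative integer $N$ and nonnegative reals $x_1,\ldots,x_{n+N}$ such that the matrix $X_{n+N}$ of (\ref{Sn}) has eigenvalues $\lambda_1,\ldots,\lambda_n$ together with $N$ zeros. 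Because zero eigenvalues contribute factors equal to one, $\det(I_{n+N}-tX_{n+N})=f(t)$.

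Next, I apply Theorem~\ref{main} with size $n+N$: viewed as a formal power series in $t$ whose coefficients are polynomials in the symbols $x_1,\ldots,x_{n+N}$, the series $1-\det(I_{n+N}-tX_{n+N})^{1/(n+N)}$ is monomially positive. Substituting the nonnegative numerical values of $x_i$ provided by Theorem~\ref{BHL} yields a power series in $t$ with nonnegative coefficients. Writing $M_0:=n+N$ and $g(t):=1-f(t)^{1/M_0}$, this says $g$ has nonnegative coefficients; moreover, from $\log f(t)=-\sum_{k\geq 1}s_k t^k/k$, the coefficient of $t$ in $g$ equals $s_1/M_0>0$.

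Finally, for every integer $M>M_0$, set $\alpha:=M_0/M\in(0,1)$ and use $f(t)^{1/M}=(1-g(t))^{\alpha}$. The generalized binomial coefficient $\binom{\alpha}{k}=\alpha(\alpha-1)\cdots(\alpha-k+1)/k!$ has sign $(-1)^{k-1}$ for $k\geq 1$ (exactly one positive factor and $k-1$ negative factors, none of them zero because $\alpha\notin\{0,1,\ldots,k-1\}$), so
\[
1-f(t)^{1/M}=\sum_{k\geq 1}\left|\binom{\alpha}{k}\right|g(t)^{k}.
\]
Since $g$ has nonnegative coefficients and its coefficient on $t$ equals $s_1/M_0>0$, each power $g(t)^{k}$ has nonnegative coefficients together with the strictly positive coefficient $(s_1/M_0)^{k}$ on $t^{k}$. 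For each $j\geq 1$ the $k=j$ summand alone contributes the strictly positive quantity $\left|\binom{\alpha}{j}\right|(s_1/M_0)^{j}$ to the coefficient of $t^j$, and every other summand contributes nonnegatively. Hence $1-f(t)^{1/M}$ has strictly positive coefficients, and $N_0:=M_0$ suffices.

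The substantive difficulty is hidden inside the two auxiliary theorems, not in the synthesis itself; the only subtle step in the plan is passing from \emph{nonnegative} coefficients of $g$ to \emph{strictly} positive coefficients of $1-f(t)^{1/M}$ for every $M>M_0$, and this is exactly where the hypothesis $s_1>0$ is essential, because it guarantees the strictly positive linear coefficient of $g$ that the $k=j$ term of the binomial expansion amplifies. Once that is in place the fractional-power bootstrap handles all larger exponents uniformly.
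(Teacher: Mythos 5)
Your proof is correct and follows essentially the same route as the paper: invoke Theorem~\ref{BHL} to realize $f(t)=\det(I_{N_0}-tX_{N_0})$ for a nonnegative matrix $X_{N_0}$ of the form (\ref{Sn}), apply Theorem~\ref{main} to get nonnegativity of the coefficients of $g(t)=1-f(t)^{1/N_0}$, and then bootstrap to all larger $N$ via the generalized binomial expansion of $(1-g(t))^{N_0/N}$, using $\gamma_1=s_1/N_0>0$ to upgrade nonnegativity to strict positivity. You supply two details the paper leaves implicit --- the verification that $s_k>0$ for all $k$ forces $\sigma$ to be closed under conjugation (so Theorem~\ref{BHL} really applies), and the explicit accounting of how the $k=j$ term $|\binom{\alpha}{j}|g(t)^j$ contributes $|\binom{\alpha}{j}|(s_1/N_0)^j>0$ to the coefficient of $t^j$ --- but these are refinements of the same argument, not a different one.
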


\begin{proof}
Let $f(t)$ be a polynomial satisfying the assumptions of the theorem. By Theorem \ref{BHL} there exists a positive integer $N_0$ so that $$f(t)=\det(I_{N_0}-t X_{N_0}),$$ for a nonnegative matrix $X_{N_0}$ of the form (\ref{Sn}). Now Theorem \ref{main} tells us that $1-f(t)^{1/N_0}$ has nonnegative coefficients. We have
 $$f(t)^{1/N_0}=1-\sum_{j=1}^{\infty} \gamma_j t^j,$$
 where $\gamma_j \geq 0$ and $\gamma_1=\frac{s_1}{N_0}>0$.

Since the power series expansion of $1-(1-z)^{\alpha}$ around $z=0$ has positive coefficients for $\alpha \in (0,1)$ it follows that for $N>N_0$
 $$1-f(t)^{1/N}=1-(f(t)^{1/N_0})^{N_0/N}$$ 
 has positive coefficients.
 \end{proof}

\begin{corollary}
Let $A$ be an $n \times n$ positive matrix  and let $f(t)=\det(I_n-t A).$ Then there exists a positive integer $N_0$ so that $1-f(t)^{1/N}$ has positive coefficients for all $N \geq N_0.$ 
\end{corollary}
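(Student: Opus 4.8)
The plan is to deduce this Corollary directly from Theorem~\ref{NIEPappl}: the only work needed is to check that the characteristic polynomial of a positive matrix meets the two hypotheses of that theorem. Write the spectrum of $A$ as $(\lambda_1,\dots,\lambda_n)$, so that $f(t)=\det(I_n-tA)=\prod_{i=1}^n(1-\lambda_i t)$, and order the eigenvalues so that $\lambda_1=\rho(A)$ is the Perron root.

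First I would verify hypothesis~2 of Theorem~\ref{NIEPappl}. A strictly positive matrix is primitive (indeed $A>0$ outright), so by the Perron--Frobenius theorem $\rho(A)$ is a simple eigenvalue with $\rho(A)>|\lambda_i|$ for every other eigenvalue; this is exactly $\lambda_1>|\lambda_i|$ for $i=2,\dots,n$. For hypothesis~1, note that $s_k=\sum_{i=1}^n\lambda_i^k=\trace(A^k)$ and that the power $A^k$ of an entrywise positive matrix is again entrywise positive for every $k\ge 1$ (a routine induction on $k$), so its diagonal entries are positive and hence $s_k=\trace(A^k)>0$ for all $k\ge 1$. Both hypotheses hold, so Theorem~\ref{NIEPappl} supplies an integer $N_0$ with $1-f(t)^{1/N}$ having positive coefficients for all $N>N_0$; replacing $N_0$ by $N_0+1$ yields the asserted range $N\ge N_0$.

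The point to stress is that there is essentially no obstacle at this last step: all the real content has already been spent in Theorem~\ref{BHL} (which realizes $f(t)$ as $\det(I_{N_0}-tX_{N_0})$ for a nonnegative matrix $X_{N_0}$ of the form (\ref{Sn})) and in Theorem~\ref{main} (the monomial positivity of $1-(f_{N_0}(t))^{1/N_0}$), which together drive Theorem~\ref{NIEPappl}. The only things needing care are the strictness of the inequalities: that a positive matrix has a strictly positive diagonal, so $s_1>0$ rather than merely $s_1\ge 0$, and that it is positivity, not mere nonnegativity, of $A$ that forces both the strict Perron dominance and $\trace(A^k)>0$ for all $k$. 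For a merely nonnegative $A$ the statement genuinely fails --- a nilpotent $A$ gives $f\equiv 1$, and a full cyclic permutation matrix gives an $f(t)$ for which $1-f(t)^{1/N}$ has vanishing coefficients for every $N$ --- so the hypothesis ``positive'' cannot be relaxed.
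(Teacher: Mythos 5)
Your proof is correct and takes essentially the same route the paper intends: the corollary is stated without a separate proof precisely because it is a direct application of Theorem~\ref{NIEPappl}, and you have supplied the only missing content, namely the verification via Perron--Frobenius that the spectrum of a positive matrix satisfies $\lambda_1>|\lambda_i|$ and $s_k=\trace(A^k)>0$ for all $k\geq 1$. Your handling of the $N>N_0$ versus $N\geq N_0$ discrepancy by relabeling is also the right (and needed) fix, since for $N=N_0$ Theorem~\ref{main} only guarantees nonnegativity.
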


\begin{example}
The power series expansion of $1-((1-t^2)(1-t^3))^{1/N}$ around $t=0$ does not have nonnegative coefficients for any positive integer $N$. For example, it is easy to check that the coefficient of $t^5$ is always negative. This example shows that the existence of the Perron root $\lambda_1>|\lambda_i|,$ $i=2,3,\ldots,n$, is a necessary assumption in Theorem \ref{NIEPappl}. 
\end{example}

To prove a partial converse of Theorem \ref{NIEPappl} we need the following well known lemma. 

\begin{lemma}\label{lem1}
Let $\F$ be a field of characteristic zero, $A \in M_n(\F)$, $f(t)=\det (I_n-t A)$ and $s_k=\trace (A^k)$. Then:
 $$\frac{f'(t)}{f(t)}=-\sum_{j=1}^{\infty}s_jt^{j-1}.$$
\end{lemma}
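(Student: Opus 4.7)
The plan is to reduce to the case of an algebraically closed field, where the characteristic polynomial factors, and then carry out a direct logarithmic-derivative computation on a product of linear factors. Both sides of the claimed identity are formal power series in $t$ whose coefficients are polynomials in the entries of $A$, so if the identity holds after extending scalars to the algebraic closure $\overline{\F}$, it holds over $\F$.

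Working over $\overline{\F}$, write $A$ in triangular form and let $\lambda_1,\ldots,\lambda_n$ be its eigenvalues (with multiplicity), so that
$$f(t) \;=\; \det(I_n - tA) \;=\; \prod_{i=1}^{n}(1-\lambda_i t), \qquad s_k \;=\; \trace(A^k) \;=\; \sum_{i=1}^{n}\lambda_i^{k}.$$
Since $f(0)=1$, the power series $f(t)$ is a unit in $\overline{\F}[[t]]$, so $f'(t)/f(t)$ makes sense as a formal power series. The standard formal identity for the logarithmic derivative of a product gives
$$\frac{f'(t)}{f(t)} \;=\; \sum_{i=1}^{n}\frac{-\lambda_i}{1-\lambda_i t}.$$

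Next, I expand each summand as a geometric series in $\overline{\F}[[t]]$:
$$\frac{-\lambda_i}{1-\lambda_i t} \;=\; -\lambda_i \sum_{j=0}^{\infty}\lambda_i^{j}\, t^{j} \;=\; -\sum_{j=1}^{\infty}\lambda_i^{j}\, t^{j-1}.$$
Summing over $i=1,\ldots,n$ and interchanging the (finite, then formal) sums yields
$$\frac{f'(t)}{f(t)} \;=\; -\sum_{j=1}^{\infty}\Bigl(\sum_{i=1}^{n}\lambda_i^{j}\Bigr) t^{j-1} \;=\; -\sum_{j=1}^{\infty} s_j\, t^{j-1},$$
which is the claimed identity over $\overline{\F}$; it then holds over $\F$ by the coefficient-comparison remark above.

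There is essentially no obstacle here: the only points that require care are verifying that $f(t)$ is invertible in the formal power series ring (which follows from $f(0)=1$) and justifying the formal logarithmic-derivative rule $(f_1 f_2)'/(f_1 f_2) = f_1'/f_1 + f_2'/f_2$ for units of $\overline{\F}[[t]]$, both of which are standard. The characteristic-zero hypothesis is not actually used in the above derivation, but it is harmless; it would only become relevant if one wanted to integrate to obtain $\log f(t) = -\sum_j s_j t^j/j$.
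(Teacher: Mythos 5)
The paper does not actually supply a proof of this lemma; it is introduced with the words ``we need the following well known lemma'' and used without argument. Your proof is correct and standard: factoring $f(t)=\prod_i(1-\lambda_i t)$ over $\overline{\F}$, applying the formal logarithmic-derivative rule for units of $\overline{\F}[[t]]$, and expanding each $\frac{-\lambda_i}{1-\lambda_i t}$ as a geometric series. Your care about why the identity descends from $\overline{\F}$ to $\F$ (both sides are power series whose coefficients are polynomials in the entries of $A$, using that $f$ is a unit since $f(0)=1$) is exactly the right justification. It is worth noting that essentially this same logarithmic-derivative computation does appear elsewhere in the paper, inside the proof of Theorem \ref{en} (Pereira's trace-vector criterion), but phrased in terms of $p_A(x)=\det(xI-A)$ and the resolvent expansion $\sum_j x^{-j-1}\trace(A^j)=\trace((xI-A)^{-1})=\sum_j (x-\lambda_j)^{-1}$; your derivation is the $f(t)$-side analogue of that. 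Your closing observation that the characteristic-zero hypothesis is not used here, and only becomes relevant when one integrates to get $\log f(t)=-\sum_j s_j t^j/j$ (as the paper does in the proof of Lemma \ref{quot1}), is also accurate.
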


\begin{proposition}
Let $f(t)=\prod_{i=1}^n(1-\lambda_i t)$ and let $s_k=\sum_{i=1}^n \lambda_i^k.$ If there exists a positive integer $N$ such that $1-f(t)^{1/N}$ has positive coefficients, then $s_k>0$ for all positive integers $k$.
\end{proposition}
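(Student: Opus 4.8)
The plan is to compute the logarithmic derivative of $h(t):=f(t)^{1/N}$ in two different ways and match coefficients. By hypothesis we may write $h(t)=1-g(t)$ with $g(t)=\sum_{k\ge1}c_kt^k$, where $c_k>0$ for every $k\ge1$ (in particular $g$ has zero constant term). On the one hand, differentiating $\log h=\frac1N\log f$ formally gives $h'/h=\frac1N\,f'/f$, and Lemma \ref{lem1}, applied to $A=\mathrm{diag}(\lambda_1,\dots,\lambda_n)$ (for which $\det(I_n-tA)=f(t)$ and $\trace(A^k)=s_k$), yields $f'(t)/f(t)=-\sum_{j\ge1}s_jt^{j-1}$. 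Hence
$$\frac{h'(t)}{h(t)}=-\frac1N\sum_{j\ge1}s_jt^{j-1}.$$

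On the other hand, from $h=1-g$ we obtain, as an identity of formal power series (legitimate since $g(0)=0$),
$$\frac{h'(t)}{h(t)}=\frac{-g'(t)}{1-g(t)}=-\,g'(t)\sum_{m\ge0}g(t)^m.$$
Here $g'(t)=\sum_{j\ge0}(j+1)c_{j+1}t^j$ has a strictly positive coefficient in every degree, and $\sum_{m\ge0}g(t)^m=1/(1-g(t))$ has constant term $1$ and a strictly positive coefficient in every positive degree (the term $m=1$ already contributes $c_kt^k$). Since a product of two power series with nonnegative coefficients, at least one of which has a strictly positive coefficient in the relevant degree, again has strictly positive coefficients, we get $g'(t)/(1-g(t))=\sum_{j\ge0}d_jt^j$ with $d_j>0$ for all $j\ge0$.

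Comparing the two expressions for $h'/h$ coefficientwise gives $s_j/N=d_j>0$, hence $s_j>0$, for every $j\ge1$, which is the claim. I do not anticipate a genuine obstacle: the content is simply the combination of Lemma \ref{lem1} with the elementary fact that reciprocals, derivatives, and products of power series with positive coefficients remain positive. The only points deserving an explicit line are that $g(0)=0$ makes $1/(1-g)$ a well-defined formal power series, and that the hypothesis forces $c_k>0$ for \emph{every} $k\ge1$ rather than merely for some $k$.
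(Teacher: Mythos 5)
Your argument is correct and is essentially the same as the paper's: write $h=1-g$, observe that $-h'=g'$ and $1/h=1/(1-g)$ have positive coefficients so $h'/h$ has strictly negative coefficients, and compare with $h'/h=\tfrac1N f'/f=-\tfrac1N\sum_{j\ge1}s_jt^{j-1}$ from Lemma \ref{lem1}. The only cosmetic difference is that you make explicit the choice $A=\mathrm{diag}(\lambda_1,\dots,\lambda_n)$ when invoking the lemma, which the paper leaves implicit.
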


\begin{proof}
Let $h(t)=f(t)^{1/N}$, where $N$ is chosen so that $1-h(t)$ has positive coefficients. Then $-h'(t)$ and $\frac{1}{h(t)}$ both have positive coefficients. This implies that $$\frac{h'(t)}{h(t)}$$
has negative coefficients. 
On the other hand: 
$$\frac{h'(t)}{h(t)}=\frac{1}{N}\frac{f'(t)}{f(t)}=-\sum_{j=1}^{\infty} s_jt^{j-1}$$
by Lemma \ref{lem1}.
\end{proof}

\section{Special cases}\label{sc}

Before we give a complete proof of Theorem \ref{main} in Section \ref{Mpps} we look at some examples and special cases. To illustrate our problem let us look at a short proof of Theorem \ref{main} in the case when $n=2.$

\begin{example}
In the case $n=2$ we have
$$
 X_2=\npmatrix{x_1 & 1 \\ x_2 & x_1} \text{ and }
 f_2(t)=1 - 2 t x_1 + t^2 (x_1^2 - x_2).
$$  
Now we have
\begin{align*}
f_2(t)^{1/2}&=((1-x_1t)^2-x_2t^2)^{1/2} \\
                   &=(1-x_1t)\Big(1-\frac{x_2t^2}{(1-x_1t)^2}\Big)^{1/2} \\
                   &=(1-x_1 t) \left(1+\sum_{j=1}^{\infty} (-1)^j \binom{1/2}{j}\left(\frac{x_2t^2}{(1-x_1t)^2}\right)^j \right)\\
                    &= \left(1-x_1t-\sum_{j=1}^{\infty}\zeta_j\frac{(x_2t^2)^j}{(1-x_1t)^{2j-1}} \right), 
\end{align*}
 where $\zeta_j=(-1)^{j-1} \binom{1/2}{j}>0.$ Notice that $\frac{(x_2t^2)^j}{(1-x_1t)^{2j-1}}$ is monomially positive, which proves that 
  $$1-f_2(t)^{1/2}$$
is monomially positive.
\end{example}

Similar arguments may be used to prove Theorem \ref{main} in the case $n=3$ and $n=4,$ but we were unable to modify this argument to prove the general statement, and had to adopt a more circuitous route.

Theorem \ref{main} implies that $$1-(f_n(t))^{1/N}=1-g_{N,n}(t)$$ is monomially positive for all $N \geq n.$ However, no $N <n$ would give us the result. To see this we consider the case where $x_2=x_3=\ldots=x_n=0$ and $f_n(t)=(1-x_1t)^n.$ Clearly, the conclusion of the Theorem holds, since $(f_n(t))^{1/n}=1-x_1t$, but is not true for  $(f_n(t))^{1/N}$ for any $N<n.$

Now we consider the special case where $x_1=0,$ $x_3=\ldots=x_n=0.$ Without loss of generality we may assume that $x_2=1.$ Let us denote 
 $$X_{n,2}=\npmatrix{0 & 1 & 0 &  0 & \ldots & 0 \\
                            1 & 0 & 2 & 0 & \ldots & 0 \\
                            0 & 1& 0 & 3 & \ddots & \vdots \\ 
                            \vdots & \ddots & \ddots & \ddots & \ddots &0 \\
                            0 & \ldots & 0 & 1 & 0 & (n-1) \\
                            0  & \ldots & 0& 0 & 1 & 0}.
$$
and 
$f_{n,2}(t)=\det(I_n-t X_{n,2}).$

\begin{proposition}
$1-(f_{n,2}(t))^{1/{\lfloor \frac{n}{2}\rfloor}}$ has nonnegative coefficients.
\end{proposition}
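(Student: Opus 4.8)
The plan is to recognise $f_{n,2}$, after the substitution $t^2\mapsto s$, as a product of exactly $m:=\lfloor n/2\rfloor$ linear factors with positive roots, and then to quote Theorem \ref{positive}.

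First I would compute $F_{n,2}(\lambda)=\det(\lambda I_n-X_{n,2})$. Expanding this determinant along the last row of the tridiagonal matrix $\lambda I_n-X_{n,2}$ gives the three-term recurrence
$$F_{k,2}(\lambda)=\lambda\, F_{k-1,2}(\lambda)-(k-1)\,F_{k-2,2}(\lambda),\qquad k\ge 2,$$
with $F_{0,2}(\lambda)=1$ and $F_{1,2}(\lambda)=\lambda$; this is precisely the recurrence defining the (probabilists') Hermite polynomials, so $F_{n,2}=He_n$. Two consequences are needed. First, $X_{n,2}$ is similar, via the positive diagonal matrix $D=\mathrm{diag}(d_1,\ldots,d_n)$ with $d_k=\sqrt{(k-1)!}$, to the real symmetric tridiagonal (Jacobi) matrix with zero diagonal and off-diagonal entries $\sqrt{1},\sqrt{2},\ldots,\sqrt{n-1}$; hence $X_{n,2}$ has $n$ real eigenvalues, and they are distinct because those off-diagonal entries are nonzero. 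Second, induction on the recurrence yields $F_{n,2}(-\lambda)=(-1)^nF_{n,2}(\lambda)$, so the spectrum of $X_{n,2}$ is symmetric about the origin; moreover $F_{n,2}(0)=He_n(0)$ vanishes precisely when $n$ is odd, in which case $0$ is a simple eigenvalue.

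Combining these facts, the eigenvalues of $X_{n,2}$ are $\pm\mu_1,\ldots,\pm\mu_m$, together with a single $0$ when $n$ is odd, where $\mu_1,\ldots,\mu_m$ are positive and distinct and $m=\lfloor n/2\rfloor$. Consequently
$$f_{n,2}(t)=\det(I_n-tX_{n,2})=\prod_{j=1}^m (1-\mu_j t)(1+\mu_j t)=\prod_{j=1}^m\bigl(1-\mu_j^2 t^2\bigr),$$
the zero eigenvalue in the odd case contributing only the trivial factor $1-0\cdot t=1$. Writing $g(s):=\prod_{j=1}^m(1-\mu_j^2 s)$ we have $f_{n,2}(t)=g(t^2)$, hence $\bigl(f_{n,2}(t)\bigr)^{1/m}=\bigl(g(t^2)\bigr)^{1/m}$. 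Now Theorem \ref{positive}, applied to the $m$ positive real numbers $\mu_1^2,\ldots,\mu_m^2$, says that $1-\bigl(g(s)\bigr)^{1/m}$ has nonnegative coefficients as a power series in $s$; substituting $s=t^2$ shows that $1-\bigl(f_{n,2}(t)\bigr)^{1/m}$ has nonnegative coefficients (supported on the even powers of $t$), which is the claim.

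The only real content is in the first step: identifying the characteristic polynomial with $He_n$, which is what makes both the reality and the symmetry about $0$ of the spectrum available. After that the result is a change of variable plus a citation of Theorem \ref{positive}. The one point to watch is the parity bookkeeping in the odd case, where one must check that the zero eigenvalue contributes only a trivial factor, so that the number of nontrivial factors is exactly $\lfloor n/2\rfloor$, matching the exponent in the statement.
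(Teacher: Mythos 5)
Your proof is correct and follows essentially the same route as the paper's: symmetrize $X_{n,2}$ by a diagonal similarity (diagonal entries $\sqrt{(k-1)!}$, the inverse of the paper's $D_n$) to see the spectrum is real, observe that $f_{n,2}$ is a polynomial in $t^2$ with $f_{n,2}(t)=\prod_j (1-\mu_j^2 t^2)$ and $\mu_j^2>0$, and invoke Theorem \ref{positive}. The extra detail you supply --- the three-term recurrence identifying $F_{n,2}$ with the Hermite polynomial $He_n$, the resulting parity $F_{n,2}(-\lambda)=(-1)^n F_{n,2}(\lambda)$ that makes $f_{n,2}$ a function of $t^2$, and the check that in odd dimensions $0$ is a simple eigenvalue contributing only a trivial factor --- tightens points the paper leaves implicit.
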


\begin{proof}
First notice that $f_{n,2}(t)$ is a function of $t^2.$ We can write
 $$f_{n,2}(t)=\prod_{i=1}^{\lfloor \frac{n}{2}\rfloor}(1-a_it^2)$$
 for some complex numbers $a_i.$
Let $D_n$ be the diagonal matrix with the diagonal elements $$1,1,1/\sqrt{2},1/\sqrt{3!},\ldots,1/\sqrt{(n-1)!}.$$ Then $D_n^{-1}X_{n,2}D_n$ is a symmetric matrix, so all the roots of the polynomial $f_{n,2}(t)$ are real. This implies that $a_i> 0$ for $i=1,2,\ldots,\lfloor n/2\rfloor.$ Now we use Theorem \ref{positive} to finish the proof. 
\end{proof}

\section{The Trace Vector of $X_n$}

In \cite{MR2705278} the notion of the trace vector was introduced and a number of interesting results associated with it were proved. Pereira  \cite{MR2705278} showed that there exists a trace vector for every matrix  $A \in \mathbb{C}^{n \times n}$, however it may, in general, be difficult to find. We will show that the trace vector for $X_n$ is the standard basis vector $e_n$.

\begin{definition}
Let $\F$ be a field of characteristic $0$, $A \in M_n(\F)$ . Then 
 $$t_1(A)=\frac{1}{n}\trace(A)$$ 
 is called \emph{the normalized trace} of $A.$
\end{definition}

Over a general field $\F$ of characteristic $0$ we can define the trace vector in the following way.

\begin{definition}\label{DefTrace}
Let $\F$ be a field of characteristic $0$, $A \in M_n(\F)$ and $z \in \F^{n}.$ We say that $z$ is a \emph{trace vector} of $A$ if $$z^Tp(A)z=t_1(p(A))$$ for all polynomials $p.$ 
\end{definition}

\begin{remark}
Pereira \cite{MR2705278} defined a trace vector for
matrices over the complex field as in Definition \ref{DefTrace} except that the
transpose is replaced by * which means "conjugate complex transpose":
$$z^*p(A)z=t_1(p(A)).$$
He proved existence of a trace vector of his form for any $n \times n$ complex matrix, but we do not consider existence results for general matrices
here.
\end{remark}

We need a version of Theorem 2.5 of Pereira \cite{MR2705278} over general fields. 

\begin{theorem}[\cite{MR2705278}]\label{en}
Let $\F$ be a field of characteristic $0$,  $A \in M_n(\F),$ and let $A(n)\in M_{n-1}(\F)$ be the principal submatrix obtained from $A$ by deleting the $n$-th row and the $n$-th column. The standard basis vector $e_n$  is a trace vector of $A$ if and only if $$p_{A(n)}(x)=\frac{1}{n}\frac{d}{dx}p_A(x),$$
where $p_A$ and $p_{A(n)}$, respectively, denote the characteristic polynomials of $A$ and $A(n)$, and $\frac{d}{dx}$ stands for the derivative with respect to $x$.   
\end{theorem}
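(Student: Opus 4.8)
# Proof Proposal for Theorem \ref{en}

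The plan is to package all the scalar identities $e_n^{T}p(A)e_n = t_1(p(A))$, $p$ ranging over polynomials, into one identity of rational functions and read it off from the resolvent of $A$. Since both sides of the defining relation are $\F$-linear in $p$, and since $p(A)$ for $\deg p \le n-1$ already spans everything one needs (by Cayley--Hamilton), the vector $e_{n}$ is a trace vector of $A$ if and only if
$$e_n^{T}A^{k}e_n = (A^{k})_{nn} = \tfrac{1}{n}\trace(A^{k}) \quad\text{for every integer } k \ge 0,$$
the case $k=0$ being the trivial identity $1 = \tfrac{1}{n}\,n$. Multiplying the $k$-th identity by $x^{-(k+1)}$ and summing, this is equivalent to the equality of formal Laurent series at $x=\infty$
$$\sum_{k\ge 0}(A^{k})_{nn}\,x^{-(k+1)} \;=\; \frac{1}{n}\sum_{k\ge 0}\trace(A^{k})\,x^{-(k+1)}.$$

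Next I would identify each side with a rational function. The left side is the $(n,n)$ entry of the resolvent $(xI_n-A)^{-1}=\sum_{k\ge 0}A^{k}x^{-(k+1)}$; writing $(xI_n-A)^{-1}=\det(xI_n-A)^{-1}\operatorname{adj}(xI_n-A)$ and noting that the $(n,n)$ entry of the adjugate is the $(n,n)$ cofactor of $xI_n-A$, namely $\det(xI_{n-1}-A(n))=p_{A(n)}(x)$, this entry equals $p_{A(n)}(x)/p_{A}(x)$. For the right side, $\sum_{k\ge 0}\trace(A^{k})x^{-(k+1)}=\trace\big((xI_n-A)^{-1}\big)$, and applying Lemma \ref{lem1} to $f(t)=\det(I_n-tA)=t^{n}p_{A}(1/t)$ together with the substitution $t=1/x$ gives $\trace\big((xI_n-A)^{-1}\big)=p_{A}'(x)/p_{A}(x)$. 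Hence $e_n$ is a trace vector of $A$ if and only if
$$\frac{p_{A(n)}(x)}{p_{A}(x)} \;=\; \frac{1}{n}\,\frac{p_{A}'(x)}{p_{A}(x)},$$
and multiplying through by $p_{A}(x)$ yields precisely $p_{A(n)}(x)=\tfrac{1}{n}\tfrac{d}{dx}p_{A}(x)$, which is Theorem \ref{en}.

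Two points deserve a line of care, though neither is a genuine obstacle. First, comparing the coefficients of $x^{-(k+1)}$ for all $k$ must be justified over an arbitrary field $\F$ of characteristic zero: this is fine because the relevant objects are rational functions over $\F$ that vanish at $x=\infty$, and such a rational function is determined by its Laurent coefficients there, so equality of the series is equivalent to equality of the rational functions. Second, the logarithmic-derivative identity underlying Lemma \ref{lem1} is an identity in $\F(x)$ and needs no splitting of $p_{A}$ over $\F$; if preferred, one proves it over the algebraic closure, where $p_{A}(x)=\prod_i(x-\lambda_i)$, and descends since both sides already lie in $\F(x)$. The whole content of the theorem is thus the single observation that ``$e_n$ is a trace vector of $A$'' is exactly the coefficientwise form of the resolvent identity $\big((xI_n-A)^{-1}\big)_{nn}=\tfrac{1}{n}\trace\big((xI_n-A)^{-1}\big)$.
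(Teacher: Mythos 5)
Your proposal is correct and follows essentially the same route as the paper: both proofs expand the resolvent $(xI_n-A)^{-1}=\sum_{k\ge 0}A^k x^{-(k+1)}$, identify its $(n,n)$ entry with $p_{A(n)}(x)/p_A(x)$ via the adjugate, identify its normalized trace with $\tfrac{1}{n}p_A'(x)/p_A(x)$, and equate. The only cosmetic differences are that you collapse the two directions into a single rational-function identity (the paper writes the forward and converse chains separately) and you invoke Lemma~\ref{lem1} for the trace-of-resolvent computation where the paper passes to eigenvalues in an extension field; both of these are matters of presentation rather than substance.
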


\begin{proof}
Pereira's proof of this result easily extends to general fields and we include it here for convenience. 

Formally we can write 
 $$(xI_n-A)^{-1}=\sum_{j=0}^{\infty} \frac{1}{x^{j+1}} A^j.$$
Let $\lambda_1,\lambda_2,\ldots,\lambda_n$ be the eigenvalues of $A$ in an appropriate extension field of $\F$.
 Let $e_n$ be a trace vector for $A$, then
\begin{align*}
\frac{p_{A(n)}(x)}{p_A(x)}&= e_n^T(xI_n-A)^{-1}e_n \\
                 &=\frac{1}{n}\sum_{j=0}^{\infty} \frac{1}{x^{j+1}}\trace (A^j) \\
                 &=\frac{1}{n}\trace((xI_n-A)^{-1}) \\
                 &=\frac{1}{n}\sum_{j=1}^n \frac{1}{x-\lambda_j} \\
                 &=\frac{1}{n}\frac{\frac{d}{dx}p_A(x)}{p_A(x)}.
 \end{align*}
Conversly, let $p_{A(n)}(x)=\frac{1}{n}\frac{d}{dx}p_A(x)$. Then
 \begin{align*}
 e_n^T(xI_n-A)^{-1}e_n&=\frac{p_{A(n)}(x)}{p_A(x)}\\
              &=\frac{1}{n}\frac{\frac{d}{dx}p_A(x)}{p_A(x)}\\
              &=\frac{1}{n}\sum_{j=1}^n \frac{1}{x-\lambda_j}\\
              &=\frac{1}{n}\trace((xI_n-A)^{-1}).
 \end{align*}
 
\end{proof}

Next we find recursive equations for $F_n(x)$ and $f_n(t).$

\begin{lemma}\label{expansion}
Let us define  $f_0(t):=1$  and  $F_0(x):=1$. Then:
\begin{enumerate}
\item
$F_n(x)=(x- x_1)F_{n-1}(x)-\sum_{i=2}^n\frac{(n-1)!}{(n-i)!}x_iF_{n-i}(x).$
\item $f_n(t)=(1-x_1t)f_{n-1}(t)-\sum_{i=2}^n\frac{(n-1)!}{(n-i)!}x_if_{n-i}(t)t^i.$
\end{enumerate}
\end{lemma}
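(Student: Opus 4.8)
The plan is to expand the determinant $F_n(x)=\det(xI_n-X_n)$ along its first column (equivalently $f_n(t)=\det(I_n-tX_n)$ along its first row), exploiting the banded, almost-Toeplitz structure of $X_n$ in (\ref{Sn}). Write $M_n=xI_n-X_n$. The first column of $M_n$ is $(x-x_1,-x_2,-x_3,\ldots,-x_n)^T$, so cofactor expansion along column $1$ gives $F_n(x)=\sum_{i=1}^n (-1)^{i+1}(M_n)_{i1}\det(M_n^{(i1)})$, where $M_n^{(i1)}$ is the minor obtained by deleting row $i$ and column $1$. The key structural observation is that $M_n^{(i1)}$ is block upper-triangular: deleting column $1$ and row $i$ leaves, in rows $1,\ldots,i-1$, an upper-triangular block whose diagonal consists of the superdiagonal entries $1,2,\ldots,i-1$ of $X_n$ (with the sign from $xI_n-X_n$ they are $-1,-2,\ldots,-(i-1)$), and whose bottom-right block is exactly $M_{n-i}$ — the same structured matrix one size down, because rows and columns $i+1,\ldots,n$ of $X_n$ form a copy of $X_{n-i}$. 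Hence $\det(M_n^{(i1)}) = \left(\prod_{j=1}^{i-1}(-j)\right) F_{n-i}(x) = (-1)^{i-1}(i-1)!\,F_{n-i}(x)$.

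Combining, the $i$-th term contributes $(-1)^{i+1}\cdot(M_n)_{i1}\cdot(-1)^{i-1}(i-1)!\,F_{n-i}(x) = (M_n)_{i1}(i-1)!\,F_{n-i}(x)$. For $i=1$ this is $(x-x_1)F_{n-1}(x)$ since $(M_n)_{11}=x-x_1$ and $0!=1$; for $i\ge 2$, $(M_n)_{i1}=-x_i$, giving $-x_i(i-1)!\,F_{n-i}(x)$. Summing yields $F_n(x)=(x-x_1)F_{n-1}(x)-\sum_{i=2}^n (i-1)!\,x_i F_{n-i}(x)$.

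This last expression does not yet match the claimed recursion, whose coefficient is $\frac{(n-1)!}{(n-i)!}$ rather than $(i-1)!$, so a reconciliation step is needed — and this is the part to be careful with. The discrepancy reflects that the superdiagonal entry in position $(j,j+1)$ of $X_n$ in (\ref{Sn}) is $j$, so the superdiagonal entries appearing in the minor $M_n^{(i1)}$ are not $1,\ldots,i-1$ but rather the top $i-1$ superdiagonal entries, namely $1,2,\ldots,i-1$ only when the deleted row is $i$ counted from the top; one must track which superdiagonal entries of $X_n$ actually survive in $M_n^{(i1)}$ after deleting row $i$ and column $1$. A clean way to avoid bookkeeping errors is to instead do the expansion at the bottom: delete the \emph{last} column and expand, or apply Theorem \ref{en} directly — indeed $X_n(n)=X_{n-1}$, so if we can show $e_n$ is a trace vector of $X_n$ then $F_{n-1}(x)=\frac1n F_n'(x)$, which combined with a one-step cofactor expansion pins down the coefficients. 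I expect the main obstacle to be purely this coefficient-tracking: verifying that the product of surviving superdiagonal entries in the relevant minor equals $\frac{(n-1)!}{(n-i)!}$, which amounts to checking that the minor picks up the superdiagonal entries indexed $n-i+1,\ldots,n-1$ (whose product is $(n-1)!/(n-i)!$) rather than $1,\ldots,i-1$. Once the correct minor is identified the identity is immediate, and part (2) follows from part (1) by the substitution $f_n(t)=t^nF_n(1/t)$, which sends $x_i F_{n-i}(x)$ to $x_i t^i f_{n-i}(t)$ and $(x-x_1)F_{n-1}(x)$ to $(1-x_1t)f_{n-1}(t)$.
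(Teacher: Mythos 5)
Your expansion along the first column does not work, and the gap is not merely in ``coefficient-tracking'' — it lies in the claimed block structure. You assert that deleting row $i$ and column $1$ leaves a minor whose bottom-right block ``is exactly $M_{n-i}$, because rows and columns $i+1,\ldots,n$ of $X_n$ form a copy of $X_{n-i}$.'' This is false: the trailing $(n-i)\times(n-i)$ principal submatrix of $X_n$ has superdiagonal entries $i+1, i+2, \ldots, n-1$, not $1,2,\ldots,n-i-1$, so it is not $X_{n-i}$ and its characteristic polynomial is not $F_{n-i}(x)$. (It is the \emph{leading} principal submatrix of $X_n$ that equals $X_{n-1}$, not the trailing one.) You sense that something is off and suggest two fixes, but neither repairs the argument: (a) invoking Theorem \ref{en} to conclude $F_{n-1}=\tfrac1n F_n'$ is circular, because in the paper the fact that $e_n$ is a trace vector of $X_n$ (Corollary \ref{trace}) is itself deduced from Proposition \ref{diff1}, which in turn is proved from the very Lemma \ref{expansion} you are trying to establish; (b) expanding along the \emph{last column} cannot yield the $n$-term recursion, since the last column of $I_n - tX_n$ has only two nonzero entries.

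The paper's proof expands along the \emph{last row}. There, every piece falls into place: deleting row $n$ and column $n$ leaves $xI_{n-1}-X_{n-1}$, giving the $(x-x_1)F_{n-1}(x)$ term; and for the column-$(n-i+1)$ entry $-x_i$ of the last row, the minor obtained by deleting row $n$ and column $n-i+1$ is block lower-triangular, with the top-left $(n-i)\times(n-i)$ block equal to $xI_{n-i}-X_{n-i}$ (the \emph{leading} principal submatrix, genuinely $X_{n-i}$) and the bottom-right $(i-1)\times(i-1)$ block upper-triangular with diagonal $-(n-i+1),\ldots,-(n-1)$, giving the factor $(-1)^{i-1}(n-1)!/(n-i)!$. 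Your closing guess that the coefficient $(n-1)!/(n-i)!$ is the product of superdiagonal entries $n-i+1,\ldots,n-1$ is exactly right, but you would need to redo the cofactor expansion along the last row to obtain it. Part (2) then follows from $f_n(t)=t^nF_n(1/t)$ as you say.
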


\begin{proof}
We can prove the recursive equation for $F_n(x)$ by expanding  $\det(xI_n- X_n)$ along its last row.
The recursive relation for $f_n(t)$ then follows from the equality $f_n(t)=t^nF_n(\frac{1}{t})$.
\end{proof}

\begin{proposition}\label{diff1}
 $\frac{d}{dx}F_n(x)=nF_{n-1}(x)$
\end{proposition}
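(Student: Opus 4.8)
The plan is to prove $\frac{d}{dx}F_n(x) = nF_{n-1}(x)$ by induction on $n$, using the recursion for $F_n(x)$ from Lemma \ref{expansion}(1). The base case is immediate: $F_0(x)=1$ and $F_1(x)=x-x_1$, so $F_1'(x)=1=1\cdot F_0(x)$.

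For the inductive step, suppose $F_k'(x)=kF_{k-1}(x)$ for all $k<n$. Differentiating
$$F_n(x)=(x-x_1)F_{n-1}(x)-\sum_{i=2}^n\frac{(n-1)!}{(n-i)!}x_iF_{n-i}(x)$$
and feeding in the inductive hypothesis for $F_{n-1}'$ and the $F_{n-i}'$ gives
$$F_n'(x)=F_{n-1}(x)+(n-1)(x-x_1)F_{n-2}(x)-\sum_{i=2}^{n}\frac{(n-1)!}{(n-i)!}(n-i)\,x_iF_{n-i-1}(x).$$
The $i=n$ summand vanishes because of the factor $(n-i)$, and for $2\le i\le n-1$ one has $\frac{(n-1)!}{(n-i)!}(n-i)=\frac{(n-1)!}{(n-i-1)!}$. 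I would then substitute for the remaining stand-alone $F_{n-1}(x)$ its own recursion (Lemma \ref{expansion}(1) with $n$ replaced by $n-1$), merge the two resulting sums over the common range $2\le i\le n-1$, and use the identity $(n-2)!+(n-1)!=n\,(n-2)!$. This is exactly what is needed to recognise the right-hand side as $n$ times the recursion for $F_{n-1}(x)$, hence $F_n'(x)=nF_{n-1}(x)$.

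The computation is entirely routine; the only thing requiring care is the factorial bookkeeping and the shifting of summation ranges, so I do not expect a genuine obstacle here. I would close with the remark that this identity, together with the fact that deleting the last row and column of $X_n$ returns $X_{n-1}$ (so that $p_{X_n(n)}=F_{n-1}$), lets Theorem \ref{en} conclude at once that $e_n$ is a trace vector of $X_n$, which is the assertion announced at the start of this section.
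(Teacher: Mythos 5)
Your proof is correct and is essentially the paper's proof: induction on $n$, differentiate the recursion from Lemma~\ref{expansion}(1), apply the inductive hypothesis, and recognise the result as $nF_{n-1}(x)$ via that same recursion. The only cosmetic difference is in the final bookkeeping — the paper factors $(n-1)$ out of the last two terms to see $(n-1)F_{n-1}(x)$ directly and adds it to the stand-alone $F_{n-1}(x)$, whereas you instead expand the stand-alone $F_{n-1}(x)$ by its recursion, merge the two sums, and extract the common factor $n$; both arrangements are valid and equally short.
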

 
\begin{proof}
We will prove this proposition by induction on $n$. Let assume that 
\begin{equation}\label{hypo}
\frac{d}{dx}F_k(x)=kF_{k-1}(x)
\end{equation}
for $k=1,2,\ldots,n-1.$ 
Now we differentiate the recursive equation for $F_n(x)$ from Lemma \ref{expansion} and use 
(\ref{hypo}):
\begin{align*}
\frac{d}{dx}F_n(x)&=F_{n-1}(x)+(x- x_1)\frac{d}{dx}F_{n-1}(x)-\sum_{i=2}^n\tfrac{(n-1)!}{(n-i)!}x_i\frac{d}{dx}F_{n-i}(x)\\
&=F_{n-1}(x)+(x- x_1)(n-1)F_{n-2}(x)-\sum_{i=2}^{n-1}\tfrac{(n-1)!}{(n-i)!}x_i(n-i)F_{n-i-1}(x)\\
&=F_{n-1}(x)+(n-1)\left( (x- x_1)F_{n-2}(x)-\sum_{i=2}^{n-1}\tfrac{(n-2)!}{(n-1-i)!}x_iF_{n-i-1}(x)\right)\\
&=F_{n-1}(x)+(n-1)F_{n-1}(x),
\end{align*}
where the last equality follows form the recursive equation for $F_{n-1}(x).$
\end{proof} 
 
\begin{corollary}\label{trace}
The standard basis vector $e_n$  is a trace vector for $X_n.$
\end{corollary}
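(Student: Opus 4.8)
The statement to prove is Corollary \ref{trace}: the standard basis vector $e_n$ is a trace vector for $X_n$.

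The plan is to combine the two immediately preceding results. Theorem \ref{en} says that $e_n$ is a trace vector for a matrix $A \in M_n(\F)$ if and only if $p_{A(n)}(x) = \frac{1}{n}\frac{d}{dx}p_A(x)$, where $A(n)$ is the principal submatrix obtained by deleting the last row and column. So I would first identify what $X_n(n)$ is: deleting the last row and last column of the matrix $X_n$ in (\ref{Sn}) leaves exactly the matrix $X_{n-1}$ of the same structural form (with variables $x_1,\dots,x_{n-1}$ and super-diagonal entries $1,2,\dots,n-2$). Hence $p_{X_n(n)}(x) = F_{n-1}(x)$ by the definition $F_k(x) = \det(xI_k - X_k)$.

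Next I would invoke Proposition \ref{diff1}, which gives $\frac{d}{dx}F_n(x) = nF_{n-1}(x)$, equivalently $F_{n-1}(x) = \frac{1}{n}\frac{d}{dx}F_n(x)$. Putting these together, $p_{X_n(n)}(x) = F_{n-1}(x) = \frac{1}{n}\frac{d}{dx}F_n(x) = \frac{1}{n}\frac{d}{dx}p_{X_n}(x)$, which is precisely the condition in Theorem \ref{en}. Therefore $e_n$ is a trace vector for $X_n$.

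There is no real obstacle here — the corollary is essentially immediate once Theorem \ref{en} and Proposition \ref{diff1} are in hand. The only point requiring a word of care is the observation that the trailing principal submatrix $X_n(n)$ really is $X_{n-1}$; this is visible directly from the displayed form of $X_n$ in (\ref{Sn}), since deleting the last row and column removes the entry $(n-1)$ on the super-diagonal and the bottom variable $x_n$, leaving the pattern that defines $X_{n-1}$. I would state this explicitly and then chain the two equalities as above.

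\begin{proof}
Deleting the $n$-th row and $n$-th column of $X_n$ (see (\ref{Sn})) leaves exactly the matrix $X_{n-1}$; hence its characteristic polynomial is $F_{n-1}(x)$. By Proposition \ref{diff1} we have $F_{n-1}(x)=\frac{1}{n}\frac{d}{dx}F_n(x)$, that is, the characteristic polynomial of the trailing principal submatrix of $X_n$ equals $\frac{1}{n}\frac{d}{dx}$ of the characteristic polynomial of $X_n$. By Theorem \ref{en}, $e_n$ is a trace vector for $X_n$.
\end{proof}
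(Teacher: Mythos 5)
Your proof is correct and follows exactly the paper's route: the paper simply cites Theorem \ref{en} and Proposition \ref{diff1}, and you have spelled out the one observation they leave implicit, namely that the trailing principal submatrix $X_n(n)$ is $X_{n-1}$. Nothing to add.
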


\begin{proof}
Corollary of Theorem \ref{en} and Proposition \ref{diff1}.
\end{proof}

\section{Monomially positive power series}\label{Mpps}

\begin{lemma}
The power series expansion of $\frac{f_{n-1}(t)}{f_n(t)}$ around $t=0$ is monomially positive for all positive integers  $n$.
\end{lemma}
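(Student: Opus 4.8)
The plan is to express the coefficients of the power series $\frac{f_{n-1}(t)}{f_n(t)}$ directly as entries of powers of $X_n$, and then exploit the elementary fact that every entry of $X_n$ is itself a monomially positive polynomial.

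First I would invoke Corollary \ref{trace}: since $e_n$ is a trace vector for $X_n$, the computation carried out in the proof of Theorem \ref{en} yields
$$\frac{F_{n-1}(x)}{F_n(x)} = e_n^T(xI_n - X_n)^{-1}e_n,$$
where I use that the principal submatrix of $X_n$ obtained by deleting the last row and column is precisely $X_{n-1}$, so that $p_{X_n(n)} = F_{n-1}$ and $p_{X_n} = F_n$. Next I would pass from the variable $x$ to the variable $t$ via $x = 1/t$. Since $xI_n - X_n = \frac{1}{t}(I_n - tX_n)$ and $f_k(t) = t^k F_k(1/t)$, a short substitution turns the displayed identity into
$$\frac{f_{n-1}(t)}{f_n(t)} = e_n^T(I_n - tX_n)^{-1}e_n.$$

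Because $I_n - tX_n$ has constant term $I_n$ — equivalently $f_n(t)$ has constant term $1$, so $1/f_n(t)$ is a well-defined formal power series — the geometric expansion $(I_n - tX_n)^{-1} = \sum_{j \geq 0} t^j X_n^j$ is a legitimate identity of formal power series with coefficients in $M_n(\R[x_1,\ldots,x_n])$. Reading off the coefficient of $t^j$ gives
$$[t^j]\,\frac{f_{n-1}(t)}{f_n(t)} = e_n^T X_n^j e_n = (X_n^j)_{nn},$$
so it suffices to show that each $(X_n^j)_{nn}$ is monomially positive.

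Finally I would observe the structural fact that every entry of $X_n$ is a monomial in $x_1,\ldots,x_n$ with nonnegative coefficient: the entries are $x_1$ on the diagonal, $x_{i-j+1}$ below it, the positive integers $1,2,\ldots,n-1$ on the first superdiagonal, and $0$ elsewhere. Hence each term $(X_n)_{n,i_1}(X_n)_{i_1,i_2}\cdots(X_n)_{i_{j-1},n}$ occurring in the expansion of $(X_n^j)_{nn}$ is a product of monomials with nonnegative coefficients, so is itself such a monomial, and therefore the sum $(X_n^j)_{nn}$ is a polynomial with nonnegative coefficients. Since this holds for every $j$, the power series $\frac{f_{n-1}(t)}{f_n(t)}$ is monomially positive. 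There is no serious obstacle here once the trace-vector identity is available; the only points requiring care are the bookkeeping in the substitution $x=1/t$ and the (routine) justification that the formal power series manipulations are valid.
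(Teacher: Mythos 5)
Your proof is correct, and it takes a genuinely different route from the paper's. The paper partitions $X_n$ as a $2\times 2$ block matrix with $X_{n-1}$ in the top-left corner, applies the Schur-complement formula for the determinant, and arrives at $\frac{f_{n-1}(t)}{f_n(t)} = \frac{1}{1-U_n(t)}$ where $U_n(t) = x_1 t + v_n^T(I_{n-1}-tX_{n-1})^{-1}u_n t^2$; monomial positivity of $U_n$ (via the Neumann expansion of $(I_{n-1}-tX_{n-1})^{-1}$) then yields monomial positivity of the geometric series $\sum_k U_n(t)^k$. You instead use the cofactor identity $e_n^T(xI_n-X_n)^{-1}e_n = F_{n-1}(x)/F_n(x)$, substitute $x=1/t$, and expand $(I_n-tX_n)^{-1}$ directly to read off $[t^j]\frac{f_{n-1}(t)}{f_n(t)} = (X_n^j)_{nn}$, which is visibly a polynomial with nonnegative coefficients because every entry of $X_n$ is. Your version is arguably more direct: it identifies the $j$-th coefficient with a single matrix entry rather than with a sum over compositions arising from $1/(1-U_n)$. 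One small remark: the invocation of Corollary \ref{trace} (that $e_n$ is a trace vector) is a harmless red herring. The identity $e_n^T(xI_n-A)^{-1}e_n = p_{A(n)}(x)/p_A(x)$ holds for every matrix $A$ -- it is just the $(n,n)$ entry of the adjugate formula (Cramer's rule) -- and does not require $e_n$ to be a trace vector; the trace-vector hypothesis is only needed for the further equality with $\tfrac{1}{n}\trace((xI_n-A)^{-1})$, which you do not use here. Dropping that sentence would make the proof self-contained and independent of Section 5.
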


\begin{proof}
If $n=1$ the statement is clear since $f_0(t)=1$ and $f_1(t)=1-x_1 t$. Suppose $n\geq 2$.
Partition $X_n$ in the following way:
 $$X_n=\npmatrix{X_{n-1} & u_n \\ v_n^T & x_1},$$
 where $u_n=(0,\ldots,0,n-1)^T$ and $v_n=(x_n,x_{n-1},\ldots,x_2).$
 Then
  \begin{align*}
  f_n(t)&=\det(I_n-t X_n) \\
           &=\det(I_{n-1}-t X_{n-1}) (1-x_1 t-v_n^T(I_{n-1}-tX_{n-1})^{-1}u_nt^2).
  \end{align*}
  So
  $$
 \frac{f_{n-1}(t)}{ f_n(t)}=\frac{1}{(1-x_1 t-v_n^T(I_{n-1}-tX_{n-1})^{-1}u_nt^2)}.
  $$
From the formal expansion $$(I_{n-1}-tX_{n-1})^{-1}=I_{n-1}+tX_{n-1}+t^2X_{n-1}^2+\ldots,$$
 we see that  $v_n^T(I_{n-1}-tX_{n-1})^{-1}u_n$ is monomially positive. 
We have
  $$ \frac{f_{n-1}(t)}{ f_n(t)}=\frac{1}{1-U_n(t)},$$
where $U_n(t)$ is monomially positive. This proves that $ \frac{f_{n-1}(t)}{ f_n(t)}$ is monomially positive. 
\end{proof}

\begin{corollary}\label{quot}
The power series expansion of $\frac{f_{k}(t)}{f_n(t)}$ around $t=0$ is monomially positive for $k =0,1,2,\ldots,n-1$. 
\end{corollary}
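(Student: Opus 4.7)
The plan is to deduce the corollary from the preceding lemma by a simple telescoping argument. Since we already know that $\frac{f_{j-1}(t)}{f_j(t)}$ is monomially positive in $x_1,\ldots,x_j$ (and hence in $x_1,\ldots,x_n$) for every positive integer $j$, the natural thing to do is to write
\[
\frac{f_k(t)}{f_n(t)} \;=\; \prod_{j=k+1}^{n} \frac{f_{j-1}(t)}{f_j(t)},
\]
for any $0\le k\le n-1$, and read off the result from the factors.

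The key step is then the observation that the product of monomially positive formal power series is itself monomially positive: when two series $\sum \alpha_I z^I$ and $\sum \beta_J z^J$ (where $z^I$ is shorthand for a monomial in $t,x_1,\ldots,x_n$) have only nonnegative coefficients, the coefficient of any monomial in their product is a finite sum of products of nonnegative numbers, so is nonnegative. I would state this briefly as a throw-away remark, since it is completely mechanical.

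The actual writing of the proof then amounts to: fix $k\in\{0,1,\ldots,n-1\}$, apply the lemma with $n$ replaced by each of $k+1,k+2,\ldots,n$ to obtain that each of $\frac{f_{k}(t)}{f_{k+1}(t)},\ldots,\frac{f_{n-1}(t)}{f_n(t)}$ is monomially positive, and then invoke the multiplicativity observation on the telescoping product above. There is no serious obstacle; the only thing to be slightly careful about is that $f_j$ depends only on $x_1,\ldots,x_j$, so each factor in the product is a priori a series in fewer variables than $x_1,\ldots,x_n$, but monomial positivity in a subset of the variables trivially implies monomial positivity when we regard everything as a series in all of $x_1,\ldots,x_n$ (with the extra variables simply absent from the monomials).
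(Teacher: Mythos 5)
Your proposal is correct and matches the paper's own argument exactly: telescope $\frac{f_k}{f_n}$ as the product $\prod_{j=k+1}^{n}\frac{f_{j-1}}{f_j}$, apply the preceding lemma to each factor, and use closure of monomially positive series under multiplication. The extra remark about the factors involving only subsets of the variables is a harmless clarification and does not change the substance.
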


\begin{proof}
We write
$$\frac{f_{k}(t)}{ f_n(t)}= \frac{f_{k}(t)}{ f_{k+1}(t)} \frac{f_{k+1}(t)}{ f_{k+2}(t)}\ldots  \frac{f_{n-1}(t)}{ f_n(t)}$$
and note that the product of monomially positive power series is a monomially positive power series. 
\end{proof}

\begin{lemma}\label{cor1}
Let $t_k(n)=\frac{1}{n}\trace X_n^k$ denote the normalized trace of $X_n^k.$ Then
$t_k(n)-t_k(n-1) \in \R_+[x_1,x_2,\ldots,x_n]$, where $\R_+$ denotes the set of nonnegative real numbers.
\end{lemma}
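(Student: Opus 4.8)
The plan is to express $\trace X_n^k$ combinatorially as a sum over closed walks of length $k$ in the weighted digraph underlying $X_n$, and then to exhibit an injective weight-preserving map from the closed walks contributing to $\trace X_{n-1}^k$ (scaled by $n/(n-1)$, i.e. counted with multiplicity) into those contributing to $\trace X_n^k$ (scaled by $1$), leaving a surplus of walks with nonnegative monomial weight. More precisely, since $e_n$ is a trace vector of $X_n$ by Corollary \ref{trace}, and $X_n(n)=X_{n-1}$, Theorem \ref{en} gives $p_{X_{n-1}}(x)=\frac1n p_{X_n}'(x)$; combined with Lemma \ref{lem1} this yields the generating-function identity
\begin{equation*}
\sum_{k\ge 1}\trace(X_n^k)\,t^{k-1}=-\frac{f_n'(t)}{f_n(t)},\qquad
\sum_{k\ge 1}\trace(X_{n-1}^k)\,t^{k-1}=-\frac{f_{n-1}'(t)}{f_{n-1}(t)}.
\end{equation*}
So I would instead work with the difference of the normalized traces directly: $t_k(n)-t_k(n-1)$ is, up to the shift $k\mapsto k-1$, the coefficient sequence of
\begin{equation*}
\frac1n\Bigl(-\frac{f_n'(t)}{f_n(t)}\Bigr)-\frac1{n-1}\Bigl(-\frac{f_{n-1}'(t)}{f_{n-1}(t)}\Bigr)
= -\frac1n\frac{d}{dt}\log f_n(t)+\frac1{n-1}\frac{d}{dt}\log f_{n-1}(t).
\end{equation*}

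The key step is to rewrite this expression so that monomial positivity becomes visible. Using the factorization established in the proof of the previous lemma, $f_n(t)=f_{n-1}(t)\,(1-U_n(t))$ with $U_n(t)=x_1t+v_n^T(I_{n-1}-tX_{n-1})^{-1}u_n t^2$ monomially positive, we get $\log f_n(t)=\log f_{n-1}(t)+\log(1-U_n(t))$, hence
\begin{equation*}
-\frac1n\frac{f_n'}{f_n}+\frac1{n-1}\frac{f_{n-1}'}{f_{n-1}}
= \Bigl(\frac1{n-1}-\frac1n\Bigr)\Bigl(-\frac{f_{n-1}'}{f_{n-1}}\Bigr)-\frac1n\frac{d}{dt}\log(1-U_n(t)).
\end{equation*}
Now $-\frac{d}{dt}\log(1-U_n(t))=\dfrac{U_n'(t)}{1-U_n(t)}$ is manifestly monomially positive (quotient/derivative of monomially positive series), and $-f_{n-1}'/f_{n-1}=\sum_k \trace(X_{n-1}^k)t^{k-1}$; the coefficient of the latter, however, is $\trace(X_{n-1}^k)$, which I still need to know is monomially positive. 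That follows by induction on $n$: indeed $\trace(X_{n-1}^k)=(n-1)\bigl(t_k(n-1)-t_k(n-2)\bigr)+\trace(X_{n-2}^k)$, and the base case $\trace(X_0^k)=0$ together with the inductive hypothesis applied to the first summand makes $-f_{n-1}'/f_{n-1}$ monomially positive. Multiplying by the positive scalar $\frac1{n-1}-\frac1n=\frac1{n(n-1)}$ preserves this, and adding the monomially positive $\frac1n\,U_n'/(1-U_n)$ finishes the argument.

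The main obstacle I anticipate is organizing the induction cleanly: monomial positivity of $t_k(n)-t_k(n-1)$ and monomial positivity of $\trace(X_{n-1}^k)$ are interdependent, so I must set up the induction on $n$ so that both are proved simultaneously, with the factorization lemma supplying the new ingredient at each step. A secondary point to handle carefully is that "monomially positive'' was defined for power series in $t$ with polynomial coefficients in $x_1,\dots,x_n$, and here the ambient variable set grows with $n$; but since $f_{n-1}$ involves only $x_1,\dots,x_{n-1}$ and $U_n$ introduces $x_n$ with nonnegative coefficients, all series in sight genuinely lie in $\R_+[x_1,\dots,x_n][[t]]$, so the bookkeeping is harmless. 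An alternative, more combinatorial route — directly building the injection on closed walks — would also work and is perhaps more transparent, but the logarithmic-derivative manipulation above reuses machinery already in the paper and is shorter to write.
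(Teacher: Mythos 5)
There is a sign error in the central identity of your argument, and unfortunately it is fatal to the approach.

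You set $f_n=f_{n-1}(1-U_n)$ and take logarithmic derivatives. Since $\dfrac{d}{dt}\log(1-U_n)=-\dfrac{U_n'}{1-U_n}$, we get
\[
-\frac1n\frac{f_n'}{f_n}+\frac1{n-1}\frac{f_{n-1}'}{f_{n-1}}
=\Bigl(\frac1{n-1}-\frac1n\Bigr)\frac{f_{n-1}'}{f_{n-1}}+\frac1n\frac{U_n'}{1-U_n},
\]
with the first term carrying $+\frac{f_{n-1}'}{f_{n-1}}$, not $-\frac{f_{n-1}'}{f_{n-1}}$ as you wrote. Since $\frac{f_{n-1}'}{f_{n-1}}=-\sum_k s_k(n-1)t^{k-1}$ is the \emph{negative} of a monomially positive series, and $\frac1{n-1}-\frac1n=\frac1{n(n-1)}>0$, your decomposition actually expresses $\sum_k(t_k(n)-t_k(n-1))t^{k-1}$ as a \emph{difference} of monomially positive series. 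Genuine cancellation does occur: already at $k=2$, the first term contributes $-\frac1n\bigl(x_1^2+(n-2)x_2\bigr)$ and the second contributes $+\frac1n\bigl(x_1^2+2(n-1)x_2\bigr)$, and only after the $x_1^2$ monomials cancel does the sum collapse to $x_2$. So this decomposition does not make the monomial positivity visible, and the argument does not close. (A lesser point: the auxiliary identity $\trace(X_{n-1}^k)=(n-1)\bigl(t_k(n-1)-t_k(n-2)\bigr)+\trace(X_{n-2}^k)$ is also off by $t_k(n-2)$, but that is inessential because monomial positivity of $\trace(X_m^k)$ is immediate from the nonnegativity of the entries of $X_m$.)

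The missing idea is to actually \emph{use} the trace-vector property you quote in the first paragraph and then abandon. Since $e_n$ is a trace vector of $X_n$, the single entry $(X_n^k)_{nn}$ equals $t_k(n)$, and hence the trace of the principal $(n-1)\times(n-1)$ submatrix of $X_n^k$ equals $\trace(X_n^k)-(X_n^k)_{nn}=(n-1)\,t_k(n)$. On the other hand, because $X_{n-1}$ is the leading principal submatrix of $X_n$ and all entries involved are monomially nonnegative, the diagonal entries of $X_n^k$ in positions $1,\ldots,n-1$ dominate those of $X_{n-1}^k$ monomial by monomial (closed walks on $\{1,\ldots,n-1\}$ allowed to visit vertex $n$ versus those forbidden to). Subtracting gives $(n-1)\bigl(t_k(n)-t_k(n-1)\bigr)\in\R_+[x_1,\ldots,x_n]$ directly, with no cancellation. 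Your opening ``walks'' plan was in fact pointed the right way; what it needed was this exact normalization coming from the trace vector, not the logarithmic-derivative route.
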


\begin{proof}
Since $e_n$ is a trace vector for $X_n$ we have
   $$(X_n^k)_{n n} =\frac{1}{n}\trace X_n^k=t_k(n)$$
 and  
\begin{equation}\label{eeq}
 \trace(X_n^k(n))=(1-\frac{1}{n})\trace (X_n^k),
 \end{equation}
where $X_n^k(n)$ denotes the matrix obtained from $X_n^k$ by deleting its last row and column. 
Since $X_{n-1}=X_n(n)$ it is clear that 
$$\trace(X_n^k(n))-\trace (X_{n-1}^k) \in \R_+[x_1,x_2,\ldots,x_n].$$ 
Now (\ref{eeq}) tells us
$$(1-\frac{1}{n})\trace(X_n^k)-\trace (X_{n-1}^k) \in \R_+[x_1,x_2,\ldots,x_n].$$ 
We conclude that 
$$t_k(n)-t_k(n-1)\in \R_+[x_1,x_2,\ldots,x_n],$$
as we wanted to prove.
\end{proof}

\begin{lemma}\label{quot1}
The power series expansion of $w_n(t)=\frac{f_{n-1}(t)}{f_n^{1-1/n}(t)}$ around $t=0$ is monomially positive.
\end{lemma}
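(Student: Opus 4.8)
The plan is to express $w_n(t)$ in terms of objects already shown to be monomially positive. Recall from Lemma \ref{lem1} that, writing $s_k(n) = \trace(X_n^k)$,
$$\frac{f_n'(t)}{f_n(t)} = -\sum_{k=1}^{\infty} s_k(n)\, t^{k-1}.$$
Differentiating $\log f_n^{1/n}(t) = \tfrac1n \log f_n(t)$ gives
$$\frac{d}{dt}\,f_n^{1/n}(t) = \frac{1}{n}\,\frac{f_n'(t)}{f_n(t)}\,f_n^{1/n}(t) = -\Big(\sum_{k=1}^{\infty} t_k(n)\,t^{k-1}\Big) f_n^{1/n}(t),$$
where $t_k(n) = \tfrac1n s_k(n)$ is the normalized trace of $X_n^k$ as in Lemma \ref{cor1}. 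On the other hand, since by Corollary \ref{trace} the vector $e_n$ is a trace vector for $X_n$, Theorem \ref{en} (or equivalently Proposition \ref{diff1} applied to $f_n$ via $f_n(t)=t^nF_n(1/t)$) yields the clean identity relating $f_{n-1}$ to the derivative of $f_n$; concretely, $f_n'(t) = -n\, t^{n-1} F_{n-1}(1/t) + n t^{n-1}(\cdots)$ — more usefully, the trace-vector property gives $e_n^T(I_n - tX_n)^{-1}e_n = f_{n-1}(t)/f_n(t)$ and simultaneously $\tfrac1n\trace(I_n-tX_n)^{-1} = \sum_k t_k(n) t^k \cdot \tfrac1t + \tfrac1t$, so that
$$\frac{f_{n-1}(t)}{f_n(t)} = \frac{1}{n}\,\frac{t f_n'(t)}{t\, f_n(t)} \cdot(-1)\cdot(\text{shift}) \ \Longrightarrow\ f_{n-1}(t) = -\frac{t}{n}\,f_n'(t) + \text{(lower-order correction)}.$$
Let me instead record the identity I actually need in the form most convenient here: combining $f_{n-1}/f_n = e_n^T(I_n-tX_n)^{-1}e_n$ with the trace-vector identity $e_n^T(I_n-tX_n)^{-1}e_n = \tfrac1n\trace(I_n-tX_n)^{-1}$ and Lemma \ref{lem1} gives
$$\frac{f_{n-1}(t)}{f_n(t)} = 1 + \sum_{k=1}^{\infty} t_k(n)\, t^{k}.$$

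Now divide the displayed derivative formula by $f_n(t)$: since $f_n^{1/n}(t)/f_n(t) = f_n^{1/n - 1}(t) = 1/f_n^{1-1/n}(t)$, we obtain
$$w_n(t) \;=\; \frac{f_{n-1}(t)}{f_n^{1-1/n}(t)} \;=\; \frac{f_{n-1}(t)}{f_n(t)}\cdot f_n^{1/n}(t).$$
So the task reduces to showing the product of $\frac{f_{n-1}(t)}{f_n(t)}$ with $f_n^{1/n}(t)$ is monomially positive. The first factor is monomially positive by the Lemma preceding Corollary \ref{quot}. The second factor $f_n^{1/n}(t)$ is $1 - \sum_j \gamma_j t^j$, and $1-f_n^{1/n}(t)$ being monomially positive is precisely Theorem \ref{main} — but that is what the whole development is building toward, so I cannot invoke it. Instead I would argue directly: write $f_n^{1/n}(t) = \exp\big(\tfrac1n \log f_n(t)\big)$ and note $\log f_n(t) = -\sum_{k\ge 1}\tfrac{s_k(n)}{k} t^k$ by integrating Lemma \ref{lem1}, so $-\tfrac1n\log f_n(t) = \sum_{k\ge1}\tfrac{t_k(n)}{k}t^k$ has monomially positive coefficients by Lemma \ref{cor1} (each $t_k(n)$ is, telescoping from $t_k(1)=x_1^k$, a sum of the nonnegative polynomials $t_k(j)-t_k(j-1)$). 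Hence $1/f_n^{1/n}(t) = \exp(-\tfrac1n\log f_n(t))$ is monomially positive. But I need $f_n^{1/n}(t)$ itself, equivalently $1-f_n^{1/n}$, which is the sign-sensitive direction.

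The resolution — and this is the point of the lemma — is to combine the two displays above differently. From $w_n(t) = \frac{f_{n-1}(t)}{f_n(t)} f_n^{1/n}(t)$ and from $\frac{d}{dt}f_n^{1/n}(t) = -\big(\sum_k t_k(n)t^{k-1}\big) f_n^{1/n}(t)$, together with $\frac{f_{n-1}(t)}{f_n(t)} = 1+\sum_k t_k(n)t^k$, one gets
$$\frac{d}{dt}f_n^{1/n}(t) \;=\; -\frac{1}{t}\Big(\frac{f_{n-1}(t)}{f_n(t)} - 1\Big)\, f_n^{1/n}(t) \;=\; \frac{1}{t}\big(f_n^{1/n}(t) - w_n(t)\big).$$
Thus $t\,\frac{d}{dt}f_n^{1/n}(t) = f_n^{1/n}(t) - w_n(t)$, i.e. $w_n(t) = f_n^{1/n}(t) - t\frac{d}{dt}f_n^{1/n}(t) = 1 - \sum_{j\ge1}(1-j)\gamma_j t^j = 1 + \sum_{j\ge 1}(j-1)\gamma_j t^j$. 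So the coefficient of each monomial in $w_n$ equals $(j-1)$ times the corresponding coefficient of $-\big(1-f_n^{1/n}(t)\big)$ at degree $j$. Since for $j\ge 2$ the factor $j-1>0$, and the degree-$1$ term of $w_n$ is just the constant-shifted $0\cdot\gamma_1$, monomial positivity of $w_n$ is equivalent to that of $1-f_n^{1/n}$ up to the harmless degree-$1$ term — which means the honest route must be the reverse: prove monomial positivity of $w_n$ first (by a direct combinatorial/recursive argument on $f_{n-1}$ and the expansion of $f_n^{-(1-1/n)}$ via the telescoping traces of Lemma \ref{cor1}), and then read off Theorem \ref{main}. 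Concretely I would expand $f_n^{-(1-1/n)}(t) = \exp\big(\tfrac{n-1}{n}\cdot(-\log f_n(t))\big) = \exp\big((n-1)\sum_k \tfrac{t_k(n)}{k}t^k\big)$, which is monomially positive since $(n-1)\tfrac{t_k(n)}{k} \in \R_+[x_1,\dots,x_n]$ by Lemma \ref{cor1}, and then multiply by $f_{n-1}(t)$ — and here is the main obstacle: $f_{n-1}(t) = 1 - \sum_j \hat\gamma_j^{(n-1)} t^j$ is emphatically \emph{not} monomially positive, so the product argument needs the extra cancellation coming from the trace-vector identity $f_{n-1}/f_n = 1 + \sum_k t_k(n)t^k$, giving $w_n(t) = \big(1+\sum_k t_k(n)t^k\big)\,f_n^{1/n}(t)$ with $f_n(t)\cdot f_n^{-(1-1/n)}(t) = f_n^{1/n}(t)$; the genuinely delicate step is then verifying that this particular product of the monomially positive series $1+\sum_k t_k(n)t^k$ with $f_n^{1/n}(t)$ — even though the second factor's positivity is not yet known — is forced to be monomially positive, which is exactly why one must instead establish $w_n$ directly from $w_n = f_{n-1}\cdot f_n^{-(1-1/n)}$ using a careful sign analysis of how the non-positive coefficients of $f_{n-1}$ are absorbed, and I expect that sign analysis (likely via an induction on $n$ using the recursion of Lemma \ref{expansion} for $f_{n-1}$ together with Corollary \ref{quot}) to be the crux of the proof.
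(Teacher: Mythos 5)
Your proposal assembles essentially every ingredient the proof needs, but stops short of the one step that closes the argument, and ends with an incorrect guess about the shape of the missing step. You correctly derive $-\tfrac1n\log f_n(t)=\sum_{k\ge1}\tfrac{t_k(n)}{k}t^k$ from Lemma~\ref{lem1}, you correctly observe that $f_n^{-(1-1/n)}(t)=\exp\bigl((n-1)\sum_k\tfrac{t_k(n)}{k}t^k\bigr)$ is monomially positive, and you correctly diagnose that multiplying this by $f_{n-1}(t)$ directly fails because $f_{n-1}(t)$ is not itself monomially positive. Where you go astray is in persisting with \emph{product} decompositions of $w_n$ --- each of which either reintroduces the unknown sign of $f_n^{1/n}$ (circular with Theorem~\ref{main}, as you note yourself) or leaves the non-positive $f_{n-1}$ unresolved --- and then conjecturing that a ``careful sign analysis'' by induction on $n$ is required to see how the negative coefficients of $f_{n-1}$ are absorbed. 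No such induction is needed here. The resolution is to take the logarithm of the \emph{whole} quotient rather than of its factors. Since the identity $\log f_m(t)=-m\sum_{k\ge1}\tfrac{t_k(m)}{k}t^k$ holds for every $m$, applying it with $m=n-1$ and $m=n$ gives
\begin{align*}
\log w_n(t)&=\log f_{n-1}(t)-\Bigl(1-\tfrac1n\Bigr)\log f_n(t)\\
&=-(n-1)\sum_{k\ge1}\frac{t_k(n-1)}{k}\,t^k+(n-1)\sum_{k\ge1}\frac{t_k(n)}{k}\,t^k\\
&=(n-1)\sum_{k\ge1}\frac{t_k(n)-t_k(n-1)}{k}\,t^k,
\end{align*}
and each coefficient $t_k(n)-t_k(n-1)$ lies in $\R_+[x_1,\dots,x_n]$ by Lemma~\ref{cor1}. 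Hence $\log w_n$ is monomially positive, and therefore so is $w_n=\exp(\log w_n)$. The point you missed is that the exponent $1-\tfrac1n$ was chosen precisely so that the prefactor $n-1$ from $\log f_{n-1}$ matches the prefactor $(1-\tfrac1n)\cdot n=n-1$ from $\log f_n$; the two $\log$ series then subtract to leave a single series whose coefficients are the nonnegative telescoping differences supplied by the trace-vector property, and no coefficient-by-coefficient sign bookkeeping on $f_{n-1}$ is ever needed.
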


\begin{proof}
By Lemma \ref{lem1} we have: 
 \begin{equation}\label{eq1}
 \frac{f_n'(t)}{nf_n(t)}=-\sum_{j=1}^{\infty}t_j(n)t^{j-1}.
 \end{equation}
 Integration of the above equation with respect to $t$ gives us
  \begin{equation}\label{eq2}
  \frac{1}{n}\log f_n(t)=-\sum_{j=1}^{\infty}\frac{t_j(n)t^j}{j}.
  \end{equation}
 Now we have
 \begin{align*}
 \log w_n(t)&=\log f_{n-1}(t)-(1-\frac{1}{n})\log f_{n}(t)\\
                   &=-(n-1)\sum_{j=1}^{\infty}\frac{t_j(n-1)t^j}{j}+(n-1)\sum_{j=1}^{\infty}\frac{t_j(n)t^j}{j}\\
                   &=(n-1)\sum_{j=1}^{\infty}\frac{(t_j(n)-t_j(n-1))t^j}{j}.
 \end{align*}
 Using Lemma \ref{cor1} we conclude that $\log w_n(t)$ is monomially positive. This implies that $w_n(t)=\exp(\log w_n(t))$ is monomially positive. 
\end{proof}

Now we are ready to prove Theorem \ref{main}.

\begin{mainproof}
We will prove the theorem by induction on $n$. Observe that $f_1(t)=1-x_1t$ and the statement is obviously true in this case. Using Lemma \ref{expansion} we get
\begin{align*}
(f_n(t))^{1/n}&=((1-x_1t)f_{n-1}(t))^{1/n}\left(1-\sum_{i=2}^n\frac{(n-1)\ldots(n-i+1)x_if_{n-i}(t)t^i}{(1-x_1t)f_{n-1}(t)}\right)^{1/n}\\
&=((1-x_1t)f_{n-1}(t))^{1/n}(1-V_n(t))^{1/n},
\end{align*}
where 
 $$V_n(t)=\sum_{i=2}^n\frac{(n-1)\ldots(n-i+1)x_if_{n-i}(t)t^i}{(1-x_1t)f_{n-1}(t)}.$$
Notice that Corollary \ref{quot} implies that $V_n(t)$ is monomially positive.
The Taylor expansion of $(1-t)^{1/n}$ around $t=0$ gives us
 $$(1-V_n(t))^{1/n}=1-\sum_{i=1}^{\infty}\alpha_i(V_n(t))^i,$$
 where $\alpha_i >0.$ It follows that: 
  \begin{equation}\label{parts}
  (f_n(t))^{1/n}=((1-x_1t)f_{n-1}(t))^{1/n}-\sum_{i=1}^{\infty}\alpha_i((1-x_1t)f_{n-1}(t))^{1/n}(V_n(t))^i.
  \end{equation}

Now we deal with each term in the above expression separately. 

{\bf Step 1:} We prove that 
$$1-((1- x_1 t)f_{n-1}(t))^{1/n}$$ is monomially positive.

By the induction hypothesis we have 
 $$f_{n-1}(t)^{1/(n-1)}=1-x_1 t-\sum_{j=2}^{\infty}\gamma_j t^j,$$
 where $\gamma_j \in \R_+[x_1,x_2,\ldots,x_n]$.  In addition to induction we also use here that $\gamma_1 = x_1$, which is easy to see.
  \begin{align*}
 ((1-x_1 t)f_{n-1}(t))^{1/n}&=(1- x_1 t)^{1/n}\left((f_{n-1}(t))^{1/(n-1)}\right)^{(n-1)/n}\\
    &=(1-x_1 t)^{1/n}\left(1-x_1 t-\sum_{j=2}^{\infty}\gamma_j t^j\right)^{(n-1)/n}\\
    &=(1-x_1t) \left(1-\frac{\sum_{j=2}^{\infty}\gamma_j t^j}{1-x_1 t}\right)^{(n-1)/n}.
 \end{align*}
 Since 
     $(1-t)^{(n-1)/n}=1-\sum_{k=1}^{\infty}\beta_k t^k,$ where $\beta_k >0,$ 
     we have
     \begin{align*}
 ((1- x_1 t)f_{n-1}(t))^{1/n}&=(1-x_1t) \left(1-\sum_{k=1}^{\infty}\beta_k\left(\frac{\sum_{j=2}^{\infty}\gamma_j t^j}{1-x_1 t}\right)^k\right)\\
 &=1-x_1t-\sum_{k=1}^{\infty}\beta_k\frac{(\sum_{j=2}^{\infty}\gamma_j t^j)^k}{(1-x_1 t)^{k-1}}.
 \end{align*}   
 We finish this step of the proof by observing that $(1-x_1t)^{-(k-1)},$ $k=1,2,\ldots$, is monomially positive.
  
{\bf Step 2} We prove that $$W_n(t)=((1-x_1 t)f_{n-1}(t))^{1/n}V_n(t)$$ is monomially positive. 
 
 \begin{align*}
W_n(t)&= ((1- x_1t)f_{n-1}(t))^{1/n}\left(\sum_{i=2}^n\frac{(n-1)\ldots(n-i+1)x_if_{n-i}(t)}{(1-x_1t)f_{n-1}(t)}t^i\right)\\
     &=\sum_{i=2}^n\frac{(n-1)\ldots(n-i+1)x_i}{(1-x_1t)^{1-1/n}}\cdot \frac{f_{n-i}(t)}{f_{n-2}(t)} \cdot\frac{f_{n-2}(t)}{f_{n-1}(t)^{\frac{n-2}{n-1}}} \cdot \frac{1}{f_{n-1}(t)^{\frac{1}{n(n-1)}}}t^i.        
  \end{align*}
Corollary \ref{quot} tells us that $$\frac{f_{n-i}(t)}{f_{n-2}(t)}$$ is monomially positive.
Lemma 
\ref{quot1} tells us that $$\frac{f_{n-2}(t)}{f_{n-1}(t)^{\frac{n-2}{n-1}}}$$ is monomially positive.
From the induction hypothesis we get that
$$1-f_{n-1}(t)^{\frac{1}{n-1}}$$ is monomially positive, and since $1-(1-x)^{\alpha}$ has positive coefficients for $\alpha \in (0,1)$, we conclude that $$1-f_{n-1}(t)^{\frac{1}{n(n-1)}}$$ is monomially positive. This implies that $$ \frac{1}{f_{n-1}(t)^{\frac{1}{n(n-1)}}}$$ is monomially positive.

{\bf Step 3:} We prove that $$((1-x_1t)f_{n-1}(t))^{1/n}(V_n(t))^k$$ is monomially positive.
 
 We have
 \begin{align*}
 ((1-x_1 t)f_{n-1}(t))^{1/n}(V_n(t))^k&=(((1- x_1 t)f_{n-1}(t))^{1/n}V_n(t))(V_n(t))^{k-1}\\
  &=W_n(t)(V_n(t))^{k-1}.
 \end{align*}
 Since $V_n(t)$ is monomially positive so is $(V_n(t))^{k-1}$, and we have already proved that $W_n(t)$ is monomially positive.
\end{mainproof}

\section{Sign of a determinant}

In this final section we show that the positivity of coefficients established in Theorem \ref{main} is equivalent to the positivity of a certain class of determinants. 

Let 
 $$s_k(n)=\trace X_n^k$$
denote the trace of $X_n^k$ and 
$$t_k(n)=\frac{1}{n}\trace X_n^k$$ denote the normalized traces of $X_n^k.$ 
We define the following matrix
\begin{equation}\label{Tn}
T_m(n)=\npmatrix{t_1(n) & 1 & 0 &  0 & \ldots & 0 \\
                            t_2(n) & t_1(n) & 2 & 0 & \ldots & 0 \\
                            t_3(n) & t_2(n) & t_1(n) & 3 & \ddots & \vdots \\ 
                            \vdots & \ddots & \ddots & \ddots & \ddots &0 \\
                            t_{m-1}(n) & \ldots & t_3(n)  & t_2(n) & t_1(n) & (m-1) \\
                            t_m(n)  & \ldots & t_{4}(n) & t_3(n) & t_2(n) & t_1(n)}.
\end{equation}

\begin{theorem}\label{determinants}
Let $f_n(t)=\det(I_n-tX_n)$ and let
 $$h_n(t)=f_n(t)^{1/n}=1-\sum_{j=1}^{\infty}\gamma_j(n) t^j.$$
Then
$$\gamma_m(n)=\frac{1}{m!}(-1)^{m-1} \det T_m(n).$$
\end{theorem}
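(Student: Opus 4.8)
The plan is to express the coefficients $\gamma_m(n)$ of $h_n(t) = f_n(t)^{1/n}$ in terms of the normalized power sums $t_k(n)$ via the logarithmic derivative, and then recognize the resulting expression as a determinant by a Newton-type identity. The starting point is equation (\ref{eq2}) from the proof of Lemma \ref{quot1}, namely $\frac{1}{n}\log f_n(t) = -\sum_{j\ge 1} \frac{t_j(n)}{j} t^j$, so that $\log h_n(t) = -\sum_{j\ge1}\frac{t_j(n)}{j}t^j$. Equivalently $h_n'(t)/h_n(t) = -\sum_{j\ge 1} t_j(n) t^{j-1}$. Writing $h_n(t) = 1 - \sum_{j\ge1}\gamma_j(n)t^j$ and clearing denominators in $h_n'(t) = h_n(t)\cdot(-\sum_j t_j(n)t^{j-1})$, one compares coefficients of $t^{m-1}$ to obtain the recursion
$$m\,\gamma_m(n) = t_m(n) - \sum_{j=1}^{m-1} t_{m-j}(n)\,\gamma_j(n),$$
valid for all $m\ge 1$ (with the convention $\gamma_0(n) = -1$ absorbed appropriately; I would set this up carefully so the signs are right).

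Next I would recognize this recursion as exactly the one satisfied by the determinants $\frac{1}{m!}(-1)^{m-1}\det T_m(n)$. The cleanest route is to expand $\det T_m(n)$ along its last column: the entry $(m-1)$ in position $(m-1,m)$ and the entry $t_1(n)$ in position $(m,m)$ are the only nonzero entries of that column, so Laplace expansion gives a two-term recursion relating $\det T_m(n)$ to $\det T_{m-1}(n)$ and to the determinant of the matrix obtained by deleting row $m-1$ and column $m$. Iterating this — or, alternatively, expanding repeatedly along successive last columns — produces precisely a sum of the form $t_m(n)$ minus a combination of $t_{m-j}(n)\det T_j(n)$ with the correct factorial weights; matching this against the $\gamma$-recursion above, and checking the base case $m=1$ (where $\gamma_1(n) = t_1(n) = \det T_1(n)$), completes the induction on $m$. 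The bookkeeping of the constants $\frac{(m-1)!}{(m-j)!}$ coming from the superdiagonal entries $2, 3, \ldots, (m-1)$ of $T_m(n)$ against the $\frac{1}{m!}$ normalization is the one place where care is needed.

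An alternative, perhaps slicker, approach I would consider: since the recursion $m\gamma_m = t_m - \sum_{j<m} t_{m-j}\gamma_j$ is the classical Newton identity linking the $t_k$ to the elementary-symmetric-like quantities $\gamma_k$ (up to the $1/m$ factor in front), one can invoke the standard determinant formula expressing $\gamma_m$ as $\frac{(-1)^{m-1}}{m!}$ times the determinant of a matrix with the $t_k(n)$ down the first subdiagonal block and $1,2,\ldots,m-1$ on the superdiagonal — which is exactly $T_m(n)$. In fact $T_m(n)$ has precisely the shape (\ref{Sn}) with $x_k$ replaced by $t_k(n)$, and by the Proposition following (\ref{charpoly}) (with $\mu_j$ the roots of $h_n$, suitably interpreted as a formal power series truncation), $\det(xI_m - T_m(n))$ has coefficients that are the $t_k(n)$-analogue; reading off the constant term gives $\det T_m(n) = (-1)^m m!\,e_m$ where $e_m$ is the $m$-th "elementary symmetric function" of the formal roots, and $e_m = -\gamma_m(n)$ by definition of $h_n(t)$. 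The main obstacle either way is purely the sign and factorial bookkeeping in the Laplace expansion of $T_m(n)$; the conceptual content is just the logarithmic-derivative identity (\ref{eq2}) combined with Newton's identities.
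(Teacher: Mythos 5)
Your reduction to the Newton recursion $m\gamma_m(n) = t_m(n) - \sum_{j=1}^{m-1} t_{m-j}(n)\gamma_j(n)$ via the logarithmic derivative of $h_n(t)$ is exactly the paper's first half, so no daylight there. Where you diverge is in turning the recursion into the determinant formula: the paper writes the recursion for $m=1,\dots,m$ as a lower-triangular linear system with diagonal $1,2,\dots,m$ and applies Cramer's rule, so that $\gamma_m(n)$ is directly a ratio of two $m\times m$ determinants; the denominator is $m!$ and the numerator becomes $(-1)^{m-1}\det T_m(n)$ after moving the last column to the front. Your primary route (Laplace expansion of $T_m(n)$ along the last column, then an induction matching the recursion) can be made to work, but it is strictly more bookkeeping than Cramer's rule: expanding along the last column gives $t_1(n)\det T_{m-1}(n)$ plus $-(m-1)$ times the minor with row $m-1$ and column $m$ deleted, and that second minor does not yet have the $T_k$ shape, so you would need another layer of expansion or a cleverly formulated induction to close the loop. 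Your second suggestion — view $T_m(n)$ as $X_m$ with $x_k\mapsto t_k(n)$, apply the Proposition following (\ref{charpoly}), and read off the constant term — is actually a nice shortcut and is very close in spirit to the paper's citation of Brioschi for precisely this determinant identity; the one thing you would need to justify is that the formal ``roots'' $\mu_j$ exist with the prescribed power sums, which is fine because the passage from power sums to elementary symmetric functions via Newton's identities is a purely formal polynomial identity in the $t_k$ (take any degree-$m$ polynomial with the correct first $m$ coefficients). So the proposal is sound and gets the conceptual content right, but as written it postpones the computation that the paper makes immediate with Cramer's rule, and you should prefer the paper's route (or your second alternative, fully fleshed out) over the Laplace expansion, which is the least efficient of the three.
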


\begin{proof}
First we compute 
\begin{align*}
h_n'(t)=\frac{1}{n}(f_n(t))^{(1/n)-1}f_n'(t).
\end{align*}
Now we use Lemma \ref{lem1} to get
\begin{align*}
\frac{h_n'(t)}{h_n(t)}&=\frac{1}{n}\frac{f_n'(t)}{f_n(t)}\\
 &=\frac{1}{n}(-\sum_{j=1}^{\infty} s_j(n)t^{j-1}) \\
 &=-\sum_{j=1}^{\infty} t_j(n)t^{j-1}.
\end{align*}
Let $$h_n(t)=1-\sum_{j=1}^{\infty}\gamma_j(n) t^j.$$ Theorem \ref{main} tells us that  $\gamma_j(n) \in \R_+[x_1,x_2,\ldots,x_n].$
Comparing coefficients of $t$ in the equation
 $$\sum_{j=1}^{\infty}\gamma_j(n) jt^{j-1}=(1-\sum_{j=1}^{\infty}\gamma_j(n) t^{j})(\sum_{j=1}^{\infty} t_j(n)t^{j-1})$$
 we get Newton identities:
 \begin{align}
j \gamma_j(n)=t_j(n)-\sum_{i=1}^{j-1}\gamma_i(n)t_{j-i}(n)
 \end{align}
that can be written in the matrix form:
$$
\npmatrix{1 & 0 &  0 & \ldots & 0 \\
                    t_1(n) & 2 & 0 & \ldots & 0 \\
                     t_2(n) & t_1(n) & 3 & \ddots & \vdots \\ 
                      \vdots & \ddots & \ddots & \ddots &0 \\
                       t_{m-1}(n) & \ldots  & t_2(n) & t_1(n) & m}
\npmatrix{\gamma_1(n) \\ \gamma_2(n) \\ \gamma_3(n) \\ \vdots \\ \gamma_m(n)}=
\npmatrix{t_1(n) \\ t_2(n) \\ t_3(n) \\ \vdots \\ t_m(n)}.
$$
We use Cramer's rule to find $\gamma_m(n):$
\begin{align*}
\gamma_m(n) &=\frac{1}{m!} \det \npmatrix{1 & 0 &  0 & \ldots & 0 & t_1(n) \\
                    t_1(n) & 2 & 0 & \ldots & 0 & t_2(n)\\
                     t_2(n) & t_1(n) & 3 & \ddots & \vdots & t_3(n) \\ 
                      \vdots & \ddots & \ddots & \ddots & m-1 & \vdots\\
                       t_{m-1}(n) & \ldots  & t_3(n) & t_2(n) & t_1(n) & t_m(n)} \\
                    &= \frac{1}{m!}(-1)^{m-1} \det T_m(n). 
                    \end{align*}
The argument involving Cramer's Rule in the context of the Newton identities is due to Brioschi. Such identities can be found in \cite{MR901944}.                    \end{proof}
                    
\begin{corollary}
 $(-1)^{m-1}\det(T_m(n)) \in \R_+[x_1,x_2,\ldots,x_n]$ for all positive integers $m$ and $n.$ 
\end{corollary}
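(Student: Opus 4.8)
The plan is to read the corollary off directly from Theorem \ref{main} together with Theorem \ref{determinants}, so very little work remains. First I would recall that, by Theorem \ref{determinants}, the coefficients $\gamma_m(n)$ appearing in the expansion $f_n(t)^{1/n}=1-\sum_{j\ge 1}\gamma_j(n)t^j$ are given by
$$\gamma_m(n)=\frac{1}{m!}(-1)^{m-1}\det T_m(n),$$
equivalently $(-1)^{m-1}\det T_m(n)=m!\,\gamma_m(n)$ for all positive integers $m$ and $n$.

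Next I would invoke Theorem \ref{main}: the power series $1-f_n(t)^{1/n}$ is monomially positive, which is precisely the assertion that every $\gamma_m(n)$ lies in $\R_+[x_1,x_2,\ldots,x_n]$. Since $m!$ is a positive integer, multiplying by $m!$ preserves membership in $\R_+[x_1,x_2,\ldots,x_n]$, and therefore $(-1)^{m-1}\det T_m(n)\in\R_+[x_1,x_2,\ldots,x_n]$, which is the claim.

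There is no real obstacle here: the corollary is just Theorem \ref{main} restated in determinantal language, the bridge being the Brioschi/Cramer identity already recorded in the proof of Theorem \ref{determinants}. The only point that takes a moment of care is the sign and the normalizing factor $1/m!$, but both are pinned down in Theorem \ref{determinants} — the $(-1)^{m-1}$ is exactly what appears when the last column of the lower triangular Newton matrix is transposed past $m-1$ columns to produce the form $T_m(n)$ — so nothing new needs to be verified.
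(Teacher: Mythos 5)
Your proof is correct and takes exactly the approach the paper intends: the corollary follows immediately by combining Theorem \ref{main} (monomial positivity of the $\gamma_m(n)$) with the determinantal identity $(-1)^{m-1}\det T_m(n)=m!\,\gamma_m(n)$ from Theorem \ref{determinants}. The paper's own proof is just the one-line remark ``Corollary of Theorem \ref{main} and Theorem \ref{determinants},'' which you have simply spelled out.
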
      

\begin{proof}
Corollary of Theorem \ref{main} and Theorem \ref{determinants}.
\end{proof}         
                
 \section{Concluding Remarks}   
 
 In this paper we have proved a positivity result on power series  occurring in the  study of the NIEP. In the proof we used properties of a special patterned matrix $X_n$ defined in (\ref{Sn}). The matrix $X_n$ has interesting combinatorial properties which will be presented in our future work.         
                            
The statement of Theorem \ref{NIEPappl} purely involves hypothesis on the polynomials and their roots. This suggests that it should be possible to prove the theorem using methods of complex analysis without recourse to matrix theory, but we have not succeeded in doing this so far.    

\bibliographystyle{plain}
\bibliography{NIEP}

\end{document}